\setlist[enumerate]{leftmargin=.5in}
\setlist[itemize]{leftmargin=.5in}
\crefname{hypothesis}{Hypothesis}{Hypotheses}
\newcommand{\diff}{\ensuremath{\mathrm{d}}}
\newcommand{\liediff}{\ensuremath{\mathfrak{d}}}
\newcommand{\fourier}{\ensuremath{\mathcal{F}}}
\newcommand{\real}{\ensuremath{\mathbb{R}}}
\newcommand{\complex}{\ensuremath{\mathbb{C}}}
\newcommand{\SO}[1]{\ensuremath{\mathrm{SO}(#1)}}
\newcommand{\SE}[1]{\ensuremath{\mathrm{SE}(#1)}}
\newcommand{\sph}{\ensuremath{\mathrm{S}}}
\newcommand{\torus}{\ensuremath{\mathrm{T}}}
\newcommand{\algrule}[1][.2pt]{\par\vskip.2\baselineskip\hrule height #1\par\vskip.2\baselineskip}
\newcommand{\makecell}[2][@{}c@{}]{\begin{tabular}{#1}#2\end{tabular}}
\title{Uncertainty Propagation for General Stochastic Hybrid Systems\\ on Compact Lie Groups\thanks{Submitted to the editors DATE.
\funding{This work has been supported in part by AFOSR under the grant FA9550-18-1-0288.}}}
\author{Weixin Wang\thanks{The George Washington University, DC 
  (\email{wwang442@gwu.edu, tylee@gwu.edu}).}
\and Taeyoung Lee\footnotemark[2]}
\DeclareMathOperator{\diag}{diag}
\begin{document}

\maketitle

\begin{abstract}
	This paper deals with uncertainty propagation of general stochastic hybrid systems (GSHS) where the continuous state space is a compact Lie group.
	A computational framework is proposed to solve the Fokker-Planck (FP) equation that describes the time evolution of the probability density function for the state of GSHS.
	The FP equation is split into two parts: the partial differential operator corresponding to the continuous dynamics, and the integral operator arising from the discrete dynamics.
	These two parts are solved alternatively using the operator splitting technique.
	Specifically, the partial differential equation is solved by the spectral method where the density function is decomposed into a linear combination of a complete orthonormal function basis brought forth by the Peter-Weyl theorem, thereby resulting an ordinary differential equation.
	Next, the integral equation is solved by approximating the integral by a finite summation using a quadrature rule.
	The proposed method is then applied to a three-dimensional rigid body pendulum colliding with a wall, evolving on the product of the three-dimensional special orthogonal group and the Euclidean space.
	It is  illustrated that the proposed method exhibits numerical results consistent with a Monte Carlo simulation, while explicitly generating the density function that carries the complete stochastic information of the hybrid state.
\end{abstract}

\begin{keywords}
    stochastic hybrid system, Fokker-Planck equation, noncommutative harmonic analysis, Lie group
\end{keywords}

\begin{AMS}
  93C30, 37M05
\end{AMS}

\section{Introduction}
General stochastic hybrid system (GSHS) is a stochastic dynamical system that exhibits both continuous and discrete random behaviors~\cite{bujorianu2006toward}.
In a GSHS, the hybrid state consists of two parts: the continuous state that takes the value on a smooth manifold, and the discrete state that lies on a countable set. 
The continuous dynamics is defined by stochastic differential equations (SDEs) indexed by the discrete state, describing the evolution of continuous state between jumps.
The discrete dynamics describes the stochastic jump of the state, which is triggered by a Poisson process with a state-dependent rate function.
The uncertainty after the jump is represented by a stochastic kernel.
GSHS exhibits rich dynamics caused by the interplay between the continuous state and the discrete counterpart, and it has been used to model various complex systems, such as chemical reactions \cite{hespanha2005stochastic}, neuron activities \cite{pakdaman2010fluid}, air traffic control \cite{blom2009rare,prandini2008application,seah2009stochastic}, and communication networks \cite{hespanha2004stochastic}.

Uncertainty propagation involves advecting a probability density along the flow of a dynamical system according to the Fokker--Planck (FP) equation. 
The probability density can be approximated by, for example, the first $n$-moments~\cite{KumSinAASCE06}, which leads to Monte Carlo methods~\cite{MetUlaJASA49,HamHan75}, Gaussian closure methods~\cite{IyeDasJAM78,LevMorSJAM98}, and equivalent linearization and stochastic averaging~\cite{RobSpa03,PraIJNM01}. 
But, Monte Carlo methods do not propagate the probability density function directly. 
Other methods involve low-order approximations of the dynamical system, which are suitable only for moderately nonlinear systems as the omitted higher-order terms can lead to significant errors, particularly for long time intervals.
For stochastic hybrid systems, uncertainty propagation has been focused on the case when the continuous state lies in the Euclidean space.
For example, the interacting multiple model approach \cite{blom1988interacting} and the salted Kalman filter \cite{kong2021salted} linearize the dynamics and use the Gaussian distribution to describe uncertainties. 
In \cite{blom2004particle,tafazoli2006hybrid}, particle filters are employed to propagate random samples through the dynamics to approximate the uncertainty distribution.
Alternatively, to propagate the probability density function directly, the FP equation has been extended for GSHS into integro-partial differential equations (IPDEs) \cite{bect2010unifying,hespanha2004stochastic}.
And it has been solved using finite difference method \cite{liu2014hybrid} and spectral method \cite{wang2020spectral}.

In this paper, we study the uncertainty propagation for GSHS whose continuous state evolves on a compact Lie group $G$.
More specifically, given an initial probability distribution of the state, we wish to construct the probability distribution at an arbitrary time through GSHS, by solving the corresponding FP equation represented by IPDEs on $G$.
To address the presence of partial differentiation and integration in the FP equation, we employ the operator splitting method \cite{macnamara2016operator}.
Specifically, the FP equation is decomposed into two parts: the continuous dynamics which only contains the partial differential operator, and the discrete dynamics which only contains the integral operator.
These two individual equations are solved alternatively over a small time step using their respective numerical methods, and they are combined by a first order splitting scheme.

For the partial differential equation corresponding to the continuous dynamics, we use the classic spectral method.
The spectral method has been used to solve the FP equations on $\SE{2}$ and $\SO{3}$ \cite{lee2015stochastic,lee2008global,wang2006solving} for uncertainty propagation of stochastic dynamical systems without discrete dynamics. 
This utilizes the Peter-Weyl theorem \cite{PetWeyMA27}, which states that the matrix components of all finite dimensional irreducible unitary representations of a compact Lie group form a complete orthonormal basis for the space of square integrable functions.
As such, an arbitrary probability density function on $G$ can be approximated by a linear combination of the matrix elements of irreducible unitary representations.
Further using the operational properties of the representation, the FP equation is transformed into ordinary differential equations (ODEs) of the coefficients, which can be integrated by standard ODE solvers. 
Next, the integro-differential equation corresponding to the discrete dynamics is approximated by a quadrature rule over a grid, such that the density values on the grid are propagated by another set of ODEs. 
A useful property is that the grid for the discrete dynamics can be selected to be compatible with the harmonic analysis for the continuous dynamics so as to improve the computational efficiency of the overall splitting scheme. 

Compared to conventional methods based on Gaussian distributions or their mixtures, the proposed method has the advantage of being non-parametric, i.e., it does not assume a specific family of distributions, but applies to density functions with arbitrary shapes.
The proposed method constructs a probability density function, which carries the complete stochastic information about the propagated state, and as such, it can be directly used for visualization or calculating descriptive measures, such as moments, number and locations of local maxima, etc.
In this regard, although the Monte Carlo method is also non-parametric, the information of the state is implicitly carried by random samples, which is usually hard to be distilled into usable forms other than calculating moments, especially when the number of samples is large.
Also, the Monte Carlo method cannot deal with large uncertainties efficiently \cite{wang2020spectral}.
The downside of the proposed approach is that as a spectral method, its computational complexity increases exponentially with the dimension of continuous space, and quickly becomes infeasible \cite{tadmor2012review}.

In short, the main contribution of this paper is the computational framework to propagate uncertainties though GSHS on a compact Lie gorup. 
The use of noncommutative harmonic analysis to represent the uncertainty distribution in a global fashion overcomes a fundamental limitation of existing techniques, which implicitly assume that the uncertainty is localized, or has a canonical form.
By solving the Fokker--Planck equation directly, the probability density that describes the complete stochastic properties of a hybrid system is propagated. 

The rest of this paper is organized as follows: Section \ref{sec:problem} reviews the formulation of GSHS considered in this paper, and introduces its associated FP equation.
The proposed algorithm for uncertainty propagation is introduced in Section \ref{sec:UP} when the continuous state space is a general compact Lie group.
In Section \ref{sec:example}, we focus on a specific example of a 3D pendulum colliding with a wall, where the continuous state space is $\SO{3}\times \real^2$.


\section{Problem Formulation} \label{sec:problem}

In this section, we give a formal definition \cite{bujorianu2006toward} of the GSHS considered in this paper, and introduce the corresponding FP equation that describes the evolution of the probability density function over time.

\subsection{General Stochastic Hybrid System} \label{sec:GSHS}
The GSHS considered in this paper is defined as a collection $H = \{X,a,b,Init,\lambda,K\}$ as follows:
\begin{itemize}
	\item $X = G\times S$ is the hybrid state space, where $G$ is a $N_g$-dimensional compact Lie group, and $S$ is a set composed of $N_s$ discrete modes.
	The hybrid state is denoted by $(g,s) \in G\times S$.
	\item $Init\ :\ \mathcal{B}(X) \to [0,1]$ is the initial uncertainty distribution of the hybrid state, where $\mathcal{B}(X)$ is all Borel sets in $X$.
	\item The continuous state evolves according to the following stochastic differential equations between discrete jumps:
	\begin{equation} \label{eqn:SDE}
		g^{-1} \diff g = a(t,g,s)^\wedge\diff t + (b(t,s)\diff W_t)^\wedge 
	\end{equation}
	where $a\, :\, \real\times X \to \real^{N_g}$ is the drifting vector field, and $b\, :\, \real\times S \to \real^{N_g\times N_w}$ is the coefficient matrix for diffusion.
    Next, $W_t$ is a $N_w$-dimensional standard Wiener process.
	The map $(\cdot)^\wedge\ :\ \real^{N_g} \to \mathfrak{g}$ is the natural identification of $\real^{N_g}$ and $\mathfrak{g}$, the Lie algebra of $G$.
	Since $b$ does not depend on $g$, \eqref{eqn:SDE} can be defined either in Ito's or Stratonovich's sense.
	\item The discrete jump is triggered by a Poisson process, with a rate function $\lambda:\ \: X \to \real^+$ dependent on the hybrid state.
	\item During each discrete jump, the hybrid state is reset according to a stochastic kernel $K\ :\ (X,\mathcal{B}(X)) \to [0,1]$, such that $K(x^-,X^+)$ is the probability of $x^-\in X$ being reset into the set $X^+\in \mathcal{B}(X)$.
\end{itemize}

One restriction of the GSHS defined above is that it does not allow the discrete jump to be triggered by the continuous state $g$ entering a certain guard set in a deterministic fashion. 
However, such \textit{forced} jumps can be approximated by a Poisson process after choosing the rate function sufficiently large inside the guard set, and zero outside \cite{hespanha2004stochastic}.
This will be illustrated by the 3D pendulum example in Section \ref{sec:example}.

We also assume the initial distribution has a probability density function for each $s\in S$, i.e., $Init(A) = \sum_{s\in S} \int_{(g,s)\in A} p(t_0,g,s)\diff g$ for all $A\in\mathcal{B}(X)$, where $\diff g$ is the bi-invariant Haar measure on $G$ normalized such that $\int_{g\in G}\diff g = 1$.
Furthermore, the discrete transition kernel $K$ can also be written as a set of density functions:
\begin{equation*}
	K(x^-,X^+) = \sum_{s^+\in S} \int_{(g^+,s^+)\in X^+} \kappa(g^-,s^-,g^+,s^+) \diff g^+,
\end{equation*}
where $\kappa: X\times X\rightarrow\real$.

Let $(\Omega,\mathcal{F},\mathbb{P})$ be the underlying probability space, where $\Omega$ is the sample space, $\mathcal{F}$ is a sigma-algebra over $\Omega$, and $\mathbb{P}$ denotes the probability measure on $\mathcal{F}$.
For a given $\omega\in\Omega$, let $\{u_k(\omega)\}$ be a sequence of independent uniformly distributed random variables on $[0,1]$.
Then an execution of the GSHS defined above can be generated according to the following procedure.
\begin{enumerate}
	\item Initialize $g(\omega,t_0)$ and $s(\omega,t_0)$ from the initial distribution $Init$.
	\item Let $t_1(\omega) = \sup\left\{ t\ :\ \exp\left( -\int_{t_0}^t \lambda(g(\omega,\tau),s(\omega,t_0)) \diff\tau \right) > u_1(\omega) \right\}$ be the time of the first jump.
	\item During $t\in[t_0,t_1(\omega))$, $g(\omega,t)$ is a sample path of SDE \eqref{eqn:SDE} with $s=s(\omega,t_0)$, and $s(\omega,t) = s(\omega,t_0)$.
	\item At time $t_1$, the state is reset to $(g(\omega,t_1^+),s(\omega,t_1^+))$ as a sample from the kernel $\kappa(g(\omega,t_1^-),s(\omega,t_0),z^+,s^+)$.
	\item If $t_1 < \infty$, repeat from 2) with $t_0$, $s_0$, $t_1$, $u_1$ replaced by $t_k(\omega)$, $s(\omega,t_k^+)$, $t_{k+1}(\omega)$, $u_{k+1}(\omega)$ for $k = 1,2,\ldots$.
\end{enumerate}

\subsection{Fokker-Planck Equation for GSHS}

The FP equation for GSHS describes how its density function evolves over time \cite{bect2010unifying,hespanha2004stochastic}, and it is given as a set of IPDEs as follows:
\begin{equation} \label{eqn:FP}
	\begin{aligned}
		\frac{\partial p(t,g,s)}{\partial t} = &\underbrace{-\sum_{i=1}^{N_g} \liediff_j\left( a_j(t,g,s)p(t,g,s) \right) + \sum_{j,k=1}^{N_g} D_{j,k}(t,s) \liediff_j\liediff_k p(t,g,s)}_{\mathcal{L}_c^* p(t,g,s)} \\
		&+ \underbrace{\sum_{s^-\in S} \int_{g^-\in G} \kappa(g^-,s^-,g,s) \lambda(g^-,s^-) p(t,g^-,s^-) \diff g^- - \lambda(g,s)p(t,g,s)}_{\mathcal{L}_d^* p(t,g,s)},
	\end{aligned}
\end{equation}
where the subscripts denote the indices of a vector or matrix, and $D = \tfrac{1}{2}bb^T$.
Moreover, $\liediff_j$ is the left-trivialized derivative of a function on $G$, i.e., $\liediff_j f(g) = \tfrac{\diff}{\diff t}\big|_{t=0} f\big( g\exp(t\hat e_j) \big)$, where $\exp\ :\ \mathfrak{g} \to G$ is the exponential map, and $e_j$ is the $j$-th standard base vector of $\real^{N_g}$.
For each $s\in S$, \eqref{eqn:FP} defines an IPDE for $p(t,g,s)$, and thus, there are a total of $N_s$ IPDEs.

The FP equation can be interpreted as follows. 
The first two terms on the right hand side of \eqref{eqn:FP} represent the evolution caused by the continuous process: the first one represents advection due to the drift vector field, and the second corresponds to diffusion caused by noise, or the Wiener process. 
The last two terms of \eqref{eqn:FP} describe the evolution due to discrete jumps.
The third term represents the densities transitioned into $(g,s)$ from $(g^-,s^-)$ before the jump, weighted by how likely the jump happens ($\lambda$), and how likely the density is transitioned into $(g,s)$ ($\kappa$). 
The last term represents the density transitioned out of $(g,s)$. 
To distinguish these two parts explicitly, \eqref{eqn:FP} is written as
\begin{equation}\label{eqn:FP_L}
	\frac{\partial p(t,g,s)}{\partial t} = \mathcal{L}^*_c p(t,g,s) + \mathcal{L}^*_d p(t,g,s),
\end{equation}
where $\mathcal{L}^*_c$ and $\mathcal{L}^*_d$ denote the adjoint of the infinitesimal generators of the continuous SDE, and the discrete jump of the GSHS, respectively.

The FP equation \eqref{eqn:FP} describes the evolution of the probability density along the flows of GSHS on a Lie group. 
In contrast to the Fokker--Planck equation of non-hybrid systems, which is a partial-differential equation, \eqref{eqn:FP} exhibits fundamental challenges, as it is an \textit{integro-partial differential equation} that involves both partial differentiation and integration. 
Another challenge is that the probability density is defined on a nonlinear Lie group $G$, so the existing computational techniques in solving the Fokker--Planck equation on a linear space $\real^n$ cannot be directly applied. 
In this paper, these are addressed by utilizing noncommutative harmonic analysis and the splitting technique. 


\section{Uncertainty Propagation for GSHS} \label{sec:UP}

In this section, we present a computational framework to solve \eqref{eqn:FP_L} via the spectral method using noncommutative harmonic analysis on $G$.
However, the integral term in \eqref{eqn:FP} causes issues in the spectral method as there is no closed formula to express the Fourier coefficients of the integral of a function $f(g)$ over $G$ as the Fourier coefficients of $f$.
Even though a closed formula exists on $\real^N$, it has been shown that taking the Fourier transform of the integral term directly involves intensive computations \cite{wang2020spectral2}.

Instead, we adopt the operator splitting technique, where \eqref{eqn:FP_L} is split into two equations:
\begin{subequations} \label{eqn:FP_split}
	\begin{align}
		\frac{\partial p^c(t,g,s)}{\partial t} &= \mathcal{L}^*_c p^c(t,g,s), \label{eqn:FP_cont} \\
		\frac{\partial p^d(t,g,s)}{\partial t} &= \mathcal{L}^*_d p^d(t,g,s). \label{eqn:FP_dist}
	\end{align}
\end{subequations}
The desirable features are that in the absence of the term $\mathcal{L}^*_d p$, \eqref{eqn:FP_cont} corresponds to a PDE on $G$ for each discrete state; and without $\mathcal{L}^*_cp$, \eqref{eqn:FP_dist} becomes an integro-differential equation without partial differentiation.
These can be addressed by using the spectral method and numerical quadrature respectively.
Then, the solution of each part can be combined with the operator splitting. 

\subsection{Propagation over Continuous Dynamics}

First, we solve \eqref{eqn:FP_cont} via noncommutative harmonic analysis.
The objective is to decompose the density function $p^c(t,g,s)$ into a linear combination of an orthonormal basis of a function space on $G$ for each $t\geq t_0$ and $s\in S$.
Then \eqref{eqn:FP_cont} can be converted to a set of ODEs for the coefficients of the linear combination, which can be solved via standard numerical integration schemes for ODE.

We first summarize harmonic analysis on a compact Lie group $G$ \cite{chirikjian2000engineering,maslen1998efficient}.
Let $U^l(g)\ :\, G \to \mathrm{GL}(d(l),\complex)$ be the $l$-th irreducible unitary representation of $G$ with a finite dimension $d(l)$, and the collection of all $l$ be denoted by $\hat{G}$.
Then by the Peter-Weyl theorem, the functions $\big\{ \{ U^l_{m_1,m_2} \}_{m_1,m_2=1}^{d(l)} \big\}_{l\in\hat{G}}$ form an orthonormal basis of the function space $L^2(G)$.
That is, for any square integrable function $f\, :\, G\to \complex$, it can be decomposed as a linear combination
\begin{equation} \label{eqn:idft_G}
	f(g) = \sum_{l\in \hat{G}} \sum_{m_1,m_2=1}^{d(l)} d(l) \fourier_{m_1,m_2}^l[f] U^l_{m_1,m_2}(g),
\end{equation}
where $\fourier_{m_1,m_2}^l[f]$ are the Fourier coefficients of $f$, given by the $(m_1,m_2)$-th entry of
\begin{equation} \label{eqn:dft_G}
	\fourier^l[f] = \int_{g\in G} f(g) U^l(g^{-1}) \diff g.
\end{equation}
Equation \eqref{eqn:idft_G} and \eqref{eqn:dft_G} are called the inverse and forward Fourier transform of function $f$.

One crucial property is that we can express the Fourier coefficient of the derivative $\fourier[\liediff_j f]$ in terms of $\fourier[f]$ \cite{chirikjian2000engineering}.
Let $u^l\, :\, \mathfrak{g}\to \mathfrak{gl}(d(l),\complex)$ be the associated Lie algebra representation of $U^l$, i.e., for all $X\in\mathfrak{g}$, $u^l(X) = \tfrac{\diff}{\diff t} \big|_{t=0} U^l(\exp(tX))$.
Then, using that $\diff g$ is invariant under translation, we have
\begin{equation} \label{eqn:dF_G}
	\begin{aligned}
		\fourier^l[\liediff_j f] &= \lim\limits_{t\to 0} \left( \int_{g\in G} f(g\exp(t\hat{e}_j)) U^l(g^{-1}) \diff g - \int_{g\in G} f(g) U^l(g^{-1}) \diff g \right) / t \\
		&= \lim\limits_{t\to 0} \left( \int_{g\in G} f(g) U^l\left( \exp(t\hat{e}_j)g^{-1} \right)\diff g - \int_{g\in G} f(g) U^l(g^{-1}) \diff g \right) / t \\
		&= \lim\limits_{t\to 0} \left( U^l(\exp(t\hat{e}_j)) - I_{l\times l} \right) /t \cdot \int_{g\in G} f(g)U^l(g^{-1}) \diff g \\
		&= u^l(\hat{e}_j) \fourier^l[f].
	\end{aligned}
\end{equation}
This enables the conversion of \eqref{eqn:FP_cont} into a set of ODEs of the Fourier coefficients of $p^c(t,g,s)$.

\begin{theorem} \label{thm:FP_cont_ODE}
	Let $\fourier[p^c](t,s)$ be the Fourier coefficients of $p^c(t,g,s)$ which depend on $t$ and $s$.
	For any $s\in S$, if $p^c(t,g,s)$ satisfies \eqref{eqn:FP_cont}, then $\fourier[p^c](t,s)$ approximately satisfies the following ODE:
	\begin{equation} \label{eqn:FP_cont_ODE}
		\frac{\diff}{\diff t} \fourier^l[p^c](t,s) = - \sum_{j=1}^{N_g} u^l(\hat{e}_j) \fourier^l[a_jp^c](t,s) + \sum_{j,k=1}^{N_g} D_{j,k} u^l(\hat{e}_j)u^l(\hat{e}_k) \fourier^l[p^c](t,s).
	\end{equation}
\end{theorem}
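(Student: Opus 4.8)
The plan is to apply the forward Fourier transform \eqref{eqn:dft_G} to both sides of \eqref{eqn:FP_cont} and push it through each term using the differentiation rule \eqref{eqn:dF_G}. For the left-hand side, since $\fourier^l[\,\cdot\,]$ is integration of $p^c$ against the bounded matrix-valued kernel $U^l(g^{-1})$ over the compact group $G$, and $p^c$ is smooth in $t$ with time derivative controlled uniformly in $g$, one may interchange $\partial/\partial t$ with the integral, giving $\fourier^l[\partial_t p^c](t,s) = \tfrac{\diff}{\diff t}\fourier^l[p^c](t,s)$.

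Next I would treat the right-hand side term by term, using linearity of $\fourier^l$. For the advection term, regard the product $a_j(t,g,s)\,p^c(t,g,s)$ as a single square-integrable function of $g$ for fixed $t$ and $s$, and apply \eqref{eqn:dF_G} once to obtain $\fourier^l[\liediff_j(a_j p^c)] = u^l(\hat e_j)\,\fourier^l[a_j p^c]$. For the diffusion term, the coefficients $D_{j,k}(t,s)$ do not depend on $g$ and factor out of the transform, while a double application of \eqref{eqn:dF_G} gives $\fourier^l[\liediff_j\liediff_k p^c] = u^l(\hat e_j)\,\fourier^l[\liediff_k p^c] = u^l(\hat e_j)u^l(\hat e_k)\,\fourier^l[p^c]$. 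Summing all contributions reproduces \eqref{eqn:FP_cont_ODE}, for every $l\in\hat G$.

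It then remains to account for the qualifier ``approximately,'' which is the only delicate point. The family obtained above is, in principle, an exact but infinite system of coupled ODEs indexed by $l\in\hat G$, the coupling entering through $\fourier^l[a_j p^c]$: by the Peter--Weyl expansion \eqref{eqn:idft_G}, the entries of $\fourier^l[a_j p^c]$ are noncommutative convolutions of the Fourier coefficients of $a_j$ with those of $p^c$, and these convolutions in general populate modes of arbitrarily large dimension $d(l)$ even when $a_j$ and $p^c$ are band-limited. To obtain a finite-dimensional, solvable system one truncates $\hat G$ to a finite band limit, retains only the corresponding block of \eqref{eqn:FP_cont_ODE}, and computes $\fourier^l[a_j p^c]$ from the truncated coefficients, discarding the out-of-band modes of the product. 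Thus \eqref{eqn:FP_cont_ODE} holds exactly before truncation and only up to this spectral aliasing error afterward, which is the sense in which it is ``approximately'' satisfied; I would close by noting that, under standard smoothness hypotheses on $a_j$ and $p^c$, the truncation error decays faster than any algebraic rate in the band limit, so the truncated system is consistent with \eqref{eqn:FP_cont}.

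The main obstacle is precisely this last step: turning ``approximately'' into a quantitative statement requires isolating the truncation/aliasing error in the product $a_j p^c$ and bounding it in terms of the decay of the Fourier coefficients, which in turn rests on regularity assumptions that the statement leaves implicit. By contrast, the term-by-term transformation itself is routine once \eqref{eqn:dF_G} is available, and the repeated use of \eqref{eqn:dF_G} for the second-order term is immediate since $u^l$ is linear on $\mathfrak{g}$ and the left-trivialized derivatives act on the left argument of $U^l(g^{-1})$ exactly as in the computation \eqref{eqn:dF_G}.
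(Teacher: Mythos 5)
Your proof is correct and follows essentially the same route as the paper's: both project \eqref{eqn:FP_cont} onto the Peter--Weyl basis and apply \eqref{eqn:dF_G} term by term, the paper by substituting the band-limited expansion \eqref{eqn:idft_G_finite} and equating coefficients of the orthonormal basis, you by applying the forward transform \eqref{eqn:dft_G} directly to the equation. Your closing account of where the qualifier ``approximately'' enters --- an exact infinite system whose only error comes from truncating the mode coupling in $\fourier^l[a_j p^c]$ --- is in fact more explicit than the paper's, which simply attributes the approximation to the band-limited ansatz.
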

\begin{proof}
	Suppose $p^c(t,g,s)$ is approximated by a band-limited \cite{maslen1998efficient} sum of its Fourier series:
	\begin{equation} \label{eqn:idft_G_finite}
		p^c(t,g,s) \approx \sum_{l\in\hat{G}_0} \sum_{m_1,m_2=1}^{d(l)} d(l) \fourier_{m_1,m_2}^l[p^c](t,s) U^l_{m_1,m_2}(g),
	\end{equation}
	where $\hat{G}_0$ is a finite subset of $\hat{G}$.
	Substitute the above equation and a similar band-limited expansion of $\mathcal{L}_c^*p^c$ into \eqref{eqn:FP_cont}, we get for all $s\in S$
	\begin{equation*}
		\sum_{l\in\hat{G}_0} \sum_{m_1,m_2=1}^{d(l)} d(l) \frac{\diff}{\diff t} \fourier_{m_1,m_2}^l[p^c](t,s) U^l_{m_1,m_2}(g) \approx \sum_{l\in\hat{G}_0} \sum_{m_1,m_2=1}^{d(l)} d(l) \fourier_{m_1,m_2}^l[\mathcal{L}_c^*p^c](t,s) U^l_{m_1,m_2}(g).
	\end{equation*}
	Due to the orthogonality of basis, we can equate the Fourier coefficient with the same indices, i.e. $\tfrac{\diff}{\diff t} \fourier^l_{m_1,m_2}[p^c](t,s) = \fourier^l_{m_1,m_2}[\mathcal{L}^*p^c](t,s)$ for any $s\in S$.
	Equation \eqref{eqn:FP_cont_ODE} can be derived by expanding $\mathcal{L}^*p^c$ using the differentiation formula \eqref{eqn:dF_G}.
\end{proof}

Equation \eqref{eqn:FP_cont_ODE} is ODEs of the Fourier coefficients $\fourier_{m_1,m_2}^l[p^c](t,s)$, and can be integrated using numerical ODE solvers.
To illustrate how the calculations can be carried out in practice, we present with the simplest forward Euler's method.
Suppose a grid $\{g_\nu\}_{\nu=1}^{N_q}$ is put onto $G$, and there is a quadrature rule $\{w_\nu\}_{\nu=1}^{N_q}$ such that the integral in \eqref{eqn:dft_G} can be approximated by a finite summation:
\begin{equation} \label{eqn:dft_G_finite}
	\fourier^l[f] = \sum_{\nu=1}^{N_q} w_\nu f(g_\nu) U^l(g_\nu^{-1}).
\end{equation}
This allows the forward Fourier transform to be computed.
Suppose at time $t=t_k$, the values of $p^c(t_k,g_\nu,s)$, $a(t_k,g_\nu,s)$ on the grid, and $D(t_k,s)$ are given.
Then the Fourier coefficients $\fourier^l[a_jp^c](t_k,s)$, and $\fourier^l[p^c](t_k,s)$ for $l\in\hat{G}_0$ can be calculated as in \eqref{eqn:dft_G_finite} using the quadrature rule.
Namely, the right hand side of \eqref{eqn:FP_cont_ODE}, i.e., $\fourier^l[\mathcal{L}_c^*p^c](t_k,s)$ can be calculated, which enables the first order integration $\fourier^l[p^c](t_{k+1},s) = \fourier^l[p^c](t_k,s) + \fourier^l[\mathcal{L}_c^*p^c](t_k,s) \Delta t$.
And the values of $p^c(t_{k+1},g_\nu,s)$ on the grid can be recovered by \eqref{eqn:idft_G_finite}, which can be used in the next integration step.
The pseudocode is summarized in \cref{alog:UP}, where the first order integration can easily be replaced by other higher order numerical integration schemes.

The summations in \eqref{eqn:dft_G_finite} and \eqref{eqn:idft_G_finite} can be accelerated using extensions of the classic Cooley-Tukey FFT algorithm to compact Lie groups.
See for example \cite{rockmore2004recent} for a review on this topic.
The FFT algorithms have been developed for classic compact matrix Lie groups $\SO{n}$, $\mathrm{U}(n)$, $\mathrm{SU}(n)$ and $\mathrm{Sp}(n)$ in \cite{maslen1998efficient}.

\subsection{Propagation over Discrete Dynamics}
Next, consider \eqref{eqn:FP_dist}.
We apply the quadrature rule on $G$ to the integral term on the right hand side of \eqref{eqn:FP_dist}, where the integration over $G$ is replaced by a finite summation over the grid:
\begin{equation} \label{eqn:FP_dist_ODE}
	\frac{\diff p^d(t,g_\nu,s)}{\diff t} = \sum_{s^-\in S} \sum_{\nu'=1}^{N_q} w_{\nu'}\kappa(g_{\nu'}^-,s^-,g_\nu,s) \lambda(g_{\nu'}^-,s^-) p^d(t,g_{\nu'}^-,s^-) - \lambda(g_\nu,s) p^d(t,g_\nu,s).
\end{equation}
The above is a set of linear ODEs of $p^d(t,g_\nu,s)$ with $N_s\cdot N_q$ equations.
Suppose at time $t=t_k$, the values of $p^d(t_k,g_\nu,s)$ on the grid are given, then the values of $p^d$ at time $t=t_{k+1}$ can be integrated using the forward Euler's method as $p(t_{k+1},g_\nu,s) = p(t_k,g_\nu,s) + \mathcal{L}_d^* p^d(t_k,g_\nu,s) \Delta t$, where $\mathcal{L}_d^* p^d(t_k,g_\nu,s)$ is calculated as the right hand side of \eqref{eqn:FP_dist_ODE}.
The pseudocode is summarized in \cref{alog:UP}.

\subsection{Splitting Method}

In summary, \eqref{eqn:FP_cont} is transformed into ODEs of Fourier coefficients, and \eqref{eqn:FP_dist} is converted into ODEs of probability densities on the grid. 
The numerical solutions of these two equations are combined using a first order splitting scheme as follows.
Suppose the time is discretized by a sequence $\{t_k\}_{k=0}^\infty$ with a fixed increment $\Delta t = t_{k+1}-t_k $.
Given $p(t_k,g,s)$, we first solve \eqref{eqn:FP_cont} with the initial condition $p^c(t_k,g,s) = p(t_k,g,s)$  to obtain $p^c(t_{k+1},g,s)$ that is propagated over the continuous dynamics. 
Next, we solve \eqref{eqn:FP_dist} with the initial condition $p^d(t_k,g,s) = p^c(t_{k+1},g,s)$ to propagate it over the discrete dynamics, and construct $p^d(t_{k+1},g,s)$ which is taken as the solution of \eqref{eqn:FP} at $t_{k+1}$.
These two parts are integrated seamlessly, as the probability density values propagated over the discrete dynamics are on the grid designed for the Fourier transform required for the propagation over the continuous dynamics. 
The pseudocode is summarized in \cref{alog:UP}.

\begin{algorithm}
	\caption{Uncertainty propagation for GSHS}
	\label{alog:UP}
	\begin{algorithmic}[1]
		\Procedure{$p(t_{k+1},g_\nu,s) = $ uncertainty\_propagation}{$p(t_k,g_\nu,s)$}
		\State $p^c(t_k,g_\nu,s) = p(t_k,g_\nu,s)$.
		\State $p^c(t_{k+1},g_\nu,s) = $ \textsc{propagate\_continuous}($p^c(t_k,g_\nu,s)$).
		\State $p^d(t_k,g_\nu,s) = p^c(t_{k+1},g_\nu,s)$.
		\State $p^d(t_{k+1},g_\nu,s) = $ \textsc{propagate\_discrete}($p^d(t_k,g_\nu,s)$).
		\State $p(t_{k+1},g_\nu,s) = p^d(t_{k+1},g_\nu,s)$.
		\EndProcedure
		\algrule
		\Procedure{$p^c(t_{k+1},g_\nu,s) = $ propagate\_continuous}{$p^c(t_k,g_\nu,s)$}
		\For{$s\in S$}
		\For{$l\in\hat{G}_0$}
		\For{$j = 1,\ldots,N_g$}
		\State Compute $\fourier^l[a_jp^c](t_k,s)$ using \eqref{eqn:dft_G_finite}.
		\EndFor
		\State Compute $\fourier^l[p^c](t_k,s)$ using \eqref{eqn:dft_G_finite}.
		\State Compute $\fourier^l(\mathcal{L}_c^*p^c)(t_k,s)$ using the right hand side of \eqref{eqn:FP_cont_ODE}.
		\State $\fourier^l[p^c](t_{k+1},s) = \fourier^l[p^c](t_k,s) + \fourier^l(\mathcal{L}_c^*p^c)(t_k,s) \Delta t$.
		\EndFor
		\State Recover $p^c(t_{k+1},g_\nu,s)$ from $\fourier^l[p^c](t_{k+1},s)$ using \eqref{eqn:idft_G_finite}.
		\EndFor
		\EndProcedure
		\algrule
		\Procedure{$p^d(t_{k+1},g_\nu,s) = $ propagate\_discrete}{$p^d(t_k,g_\nu,s)$}
		\For{$s\in S$ and $\nu = 1,\ldots,N_q$}
		\State Compute $\mathcal{L}_d^*p^d(t_k,g_\nu,s)$ using the right hand side of \eqref{eqn:FP_dist_ODE}.
		\State $p^d(t_{k+1},g_\nu,s) = p^d(t_k,g_\nu,s) + \mathcal{L}_d^*p^d(t_k,g_\nu,s)\Delta t$.
		\EndFor
		\EndProcedure
	\end{algorithmic}
\end{algorithm}


\section{Numerical Example of 3D Pendulum} \label{sec:example}

A 3D pendulum is a rigid body that freely rotates about an inertially-fixed pivot under gravity. 
In this section, we apply the proposed method to the 3D pendulum model, to propagate the uncertainties of its attitude and angular velocity.
For the discrete dynamics, we assume that the 3D pendulum may collide with a fixed planar wall, which causes an instantaneous change of its angular velocity (see \cref{fig:pendulum}). 

\subsection{3D Pendulum Model} \label{sec:pendulum}

We use the GSHS defined in Section \ref{sec:GSHS} to model the 3D pendulum as follows. 
Two reference frames are used: the inertial frame $\{\vec e_1,\vec e_2, \vec e_3\}$, and the body-fixed frame of the pendulum $\{\vec b_1, \vec b_2,\vec b_3\}$.
The origin of the body-fixed frame is at the pivot point denoted by $P$.
The continuous state is $(R,\Omega) \in G = \SO{3}\times \real^3$, where $R\in\SO{3}$ is the attitude of the pendulum, i.e., the linear transform of coordinates from the body-fixed frame to the inertial frame, and $\Omega\in\real^3$ is the coordinates of angular velocity in the body-fixed frame.
The discrete state space is $S=\{1\}$, i.e., there is only one discrete mode.
Throughout this section, for any vector $\vec{a}$, $a\in\real^3$ denotes its coordinates in the inertial frame, if not stated otherwise.

\paragraph{Continuous Dynamics}

The continuous dynamics is given by the following SDE:
\begin{subequations} \label{eqn:pendulum_cont}
	\begin{align}
		R^T\diff R &= \hat\Omega \diff t, \\
		\diff\Omega &= \left( J^{-1}(-\Omega \times J\Omega - mg\rho\times R^Te_3) - B\Omega \right) \diff t + H_c\diff W_t,
	\end{align}
\end{subequations}
where $J\in\real^{3\times 3}$ is the moment of inertia about the pivot, and $m\in\real$ is the mass.
The coordinates of the center of mass are given by $\rho\in\real^3$ in the body-fixed frame.
The fixed gravitational acceleration is denoted by $g\in\real$.
There is a damping torque proportional to the angular velocity scaled by the matrix $B = \diag(B_1,B_2,B_3) \in \real^{3\times 3}$.
Finally, $W_t\in\real^3$ is the standard Wiener process, representing random external torques. 

\begin{figure}
	\centering
	\begin{subfigure}{0.49\textwidth}
		\centering
		\includegraphics{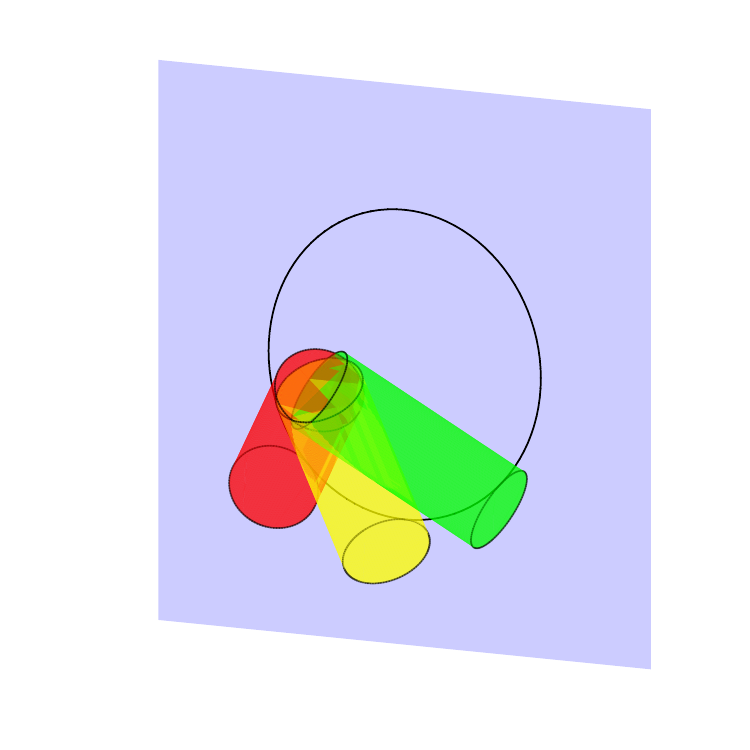}
		\caption{Possible collision configurations. \label{fig:pendulum-collisonConf}}
	\end{subfigure}
	\begin{subfigure}{0.49\textwidth}
		\includegraphics{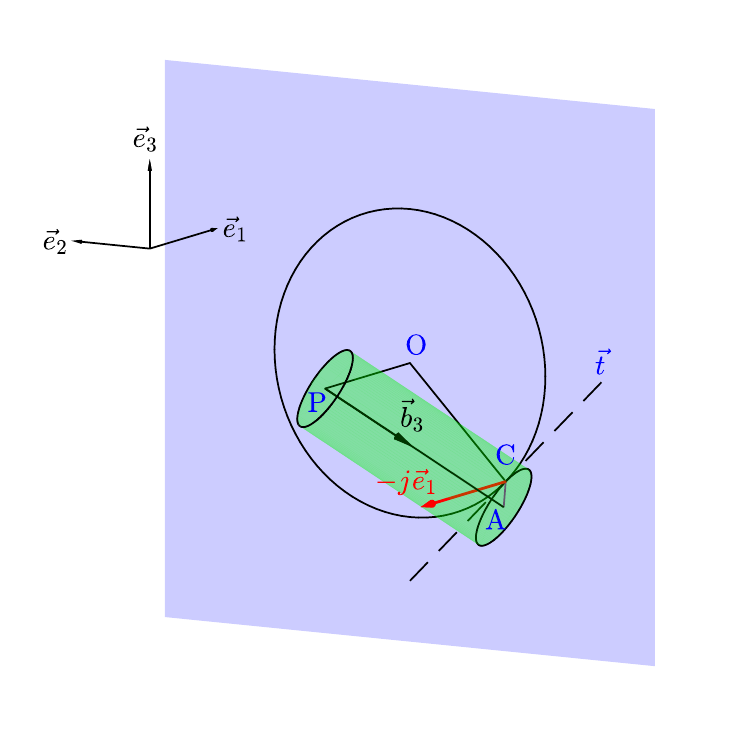}
		\caption{Collision response. \label{fig:pendulum-collisionResp}}
	\end{subfigure}
	\caption{An Axially symmetric 3D pendulum colliding with a wall. \label{fig:pendulum}}
\end{figure}

We make the following assumptions to simplify the continuous dynamics: (i) the pendulum is axially symmetric, i.e., $J = \diag(J_1,J_1,J_3)$, and $\rho = [0,0,\rho_z]^T$ is along the axis $\vec b_3$;
(ii) $Init(\{\Omega_3 = 0\}) = 1$, i.e., the initial angular velocity along the axis of symmetry is zero with probability one;
(iii) the third row of $H_c$ is zero.
Under these assumptions, it is straightforward to verify that $\mathbb{P}(\Omega_3(t)=0) = 1$ for any $t>t_0$.
As a consequence, we may ignore $\Omega_3$ and reduce the continuous state space into $G = \SO{3}\times \real^2$.
The resulting SDE is given by
\begin{subequations} \label{eqn:pendulum_cont_5D}
	\begin{align}
		R^T\diff R &= \left( \begin{bmatrix} \tilde{\Omega}^T & 0 \end{bmatrix}^T \right)^\wedge \diff t, \\
		\diff \tilde{\Omega} &= \left( \frac{mg\rho_z}{J_1} \begin{bmatrix} R_{32} \\ -R_{31} \end{bmatrix} - \tilde{B}\tilde{\Omega} \right) \diff t + \tilde{H}_c\diff W_t,
	\end{align}
\end{subequations}
where $\tilde{\Omega} = [\Omega_1, \Omega_2]^T$, $\tilde{B} = \diag(B_1,B_2)$, and $\tilde{H}_c$ is the first two rows of $H_c$.

\paragraph{Discrete Dynamics}

A planar wall is placed perpendicular to the inertial $\vec e_1$ axis, at $d_{\text{wall}}>0$ from the pivot point $P$.
As the pendulum swings, it may collide with the wall and rebound.
We further assume the pendulum is a cylinder with the height $h$ and the radius $r$.
Then, all possible collision points between the pendulum and the wall form a circle, as illustrated in \cref{fig:pendulum-collisonConf}.
Let the angle between the $\vec{e}_1$-$\vec{e}_2$ plane and $\vec b_3$ be denoted by $\theta = \arcsin(\vec{b}_3\cdot\vec{e}_1)$.
A collision occurs when
\begin{subequations} \label{eqn:collsion_cond}
	\begin{gather}
		\theta \geq \theta_0 = \arcsin\frac{d_{\text{wall}}}{\sqrt{h^2+r^2}} - \arcsin\frac{r}{\sqrt{h^2+r^2}}, \\
		(R\Omega \times \varrho) \cdot e_1 > 0,
	\end{gather}
\end{subequations}
where $\varrho\in\real^3$ is the coordinates of the vector $\overrightarrow{PC} = (h-r\tan\theta) \vec b_3 + r\sec\theta \vec e_1$ in the inertial frame, and $C$ is the point on the pendulum that has the largest coordinate along $\vec e_1$.

The first equation states that the pendulum penetrates through the wall, and the second equation implies that the pendulum is rotating towards the wall.
Equation \eqref{eqn:collsion_cond} represents a guard set defined such that whenever the continuous state enters it, the discrete jump is triggered.
This corresponds to a deterministic forced jump of hybrid systems.
In the presented GSHS, a Poisson process can be designed to approximate this forced jump, with a rate function $\lambda(R,\Omega)$ being very large when \eqref{eqn:collsion_cond} is satisfied, and zero otherwise.
However, one must make a compromise on the space variation of $\lambda$.
Specifically, the ideal $\lambda$ would make the probability density $p^c$ large outside the guard set, and close to zero inside, i.e., there is a large space variation of $p^c$ caused by the discontinuity at the boundary.
This is unfavourable for the spectral method, since a high bandwidth must be used to capture the large space variation, at the cost of increased computational load.
Here we design a rate function corresponding to \eqref{eqn:collsion_cond} with a relatively small space variation as
\begin{equation} \label{eqn:pendulum_lambda}
	\lambda(R,\Omega) = \begin{cases}
		\frac{\lambda_{\text{max}}}{2} \sin\left( \frac{pi}{2\theta_t}(\theta-\theta_0) \right) + \frac{\lambda_{\text{max}}}{2}, & \text{if} \ -\theta_t \leq \theta - \theta_0 \leq \theta_t, \ R\Omega \times \varrho \cdot e_1 > 0 \\
		\lambda_{\text{max}}, & \text{if} \ \theta - \theta_0 > \theta_t, \ R\Omega \times \varrho \cdot e_1 > 0 \\
		0, & \text{otherwise}
	\end{cases}.
\end{equation}
Namely, a threshold $\theta_t > 0$ is used to mark a ``boundary region'' of the guard set, and the sine function is used to make a smooth connection between the large $\lambda_{\text{max}} > 0$ inside the guard set, and zero outside (\cref{fig:lambda}).

\begin{figure}
	\centering
	\includegraphics{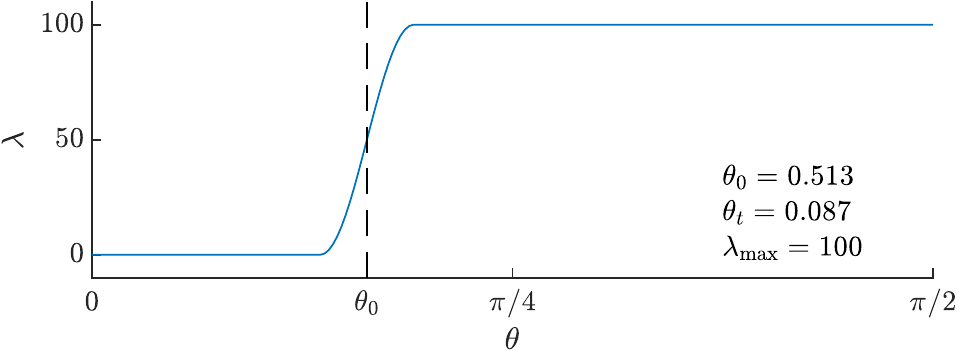}
	\caption{$\lambda(R,\Omega)$ versus $\theta(R)$ when $(R\Omega \times \varrho)\cdot e_1 > 0$. \label{fig:lambda}}
\end{figure}

Next, we formulate the stochastic kernel describing the state distribution immediately after a jump. 
During the collision, an impulse $-j\vec e_i$ with $j>0$ is applied to the pendulum at the collision point $C$ (\cref{fig:pendulum-collisionResp}) that redirects the linear velocity of the pendulum at $C$ along $\vec e_1$.
First assume there is no noise, then the collision response can be summarized in terms of the change of angular velocity and linear velocity at $C$, as follows:
\begin{subequations}
	\begin{gather}
		\bar{\Omega}^+ - \Omega^- = J^{-1}R^T\left( \varrho \times (-je_1) \right), \\
		(R\bar{\Omega}^+ \times \varrho) \cdot e_1 = -\varepsilon (R\Omega^- \times \varrho) \cdot e_1, \label{eqn:collisionResp_linear}
	\end{gather}
\end{subequations}
where $\Omega^-$, $\bar{\Omega}^+$ denote the angular velocities before and after collision respectively, and $0<\varepsilon\leq 1$ is the coefficient of restitution.
Note that $t \triangleq (\varrho\times e_1) / |\varrho\times e_1|$ is perpendicular to $b_3$, since $J = \diag(J_1,J_1,J_3)$, it can be verified that $J^{-1}R^Tt = \tfrac{1}{J_1}R^Tt$.
This indicates that $\bar{\Omega}^+-\Omega^-$ is along $R^Tt$ and is perpendicular to $R^Tb_3$, thus $\bar{\Omega}_3^+ = \Omega_3^-$ and we may ignore $\Omega_3$ as we did in the continuous dynamics where $\mathbb{P}(\Omega_3=0) = 1$.
Furthermore, \eqref{eqn:collisionResp_linear} can be simplified into
\begin{equation}
	\bar{\Omega}^+ = \Omega^- - (1+\varepsilon) (\Omega^- \cdot R^Tt) R^Tt,
\end{equation}
which gives the continuous state $\Omega$ right after the collision in an ideal case. 
Here we further assume that the angular velocity is also perturbed by a Gaussian noise during collision, i.e.,
\begin{equation}
	\Omega^+ = \bar{\Omega}^+ + \begin{bmatrix} H_d\xi \\ 0 \end{bmatrix},
\end{equation}
where $\Omega^+$ is the perturbed angular velocity after collision, $H_d\in\real^{2\times 2}$, and $\xi$ is a 2-dimensional standard Gaussian random vector.

In short, the stochastic kernel for discrete jump caused by the collision can be written as
\begin{equation} \label{eqn:pendulum_kappa}
	\begin{aligned}
		\kappa(R^-,\tilde{\Omega}^-,R^+,\tilde{\Omega}^+) = &\delta_{\SO{3}}\left( R^+(R^-)^T \right) \\
		&\quad \times \frac{1}{2\pi\sqrt{\det \Sigma_d}} \exp\left\{ -\tfrac{1}{2} \left(\tilde{\Omega}^+ - \tilde{\bar{\Omega}}_0^+\right)^T \Sigma_d^{-1} \left(\tilde{\Omega}^+ - \tilde{\bar{\Omega}}_0^+\right) \right\},
	\end{aligned}
\end{equation}
where $\Sigma_d = H_dH_d^T$, and $\delta_{\SO{3}}$ is the Dirac-delta function on $\SO{3}$.
In other words, the discrete jump does not alter the attitude $R$, while it resets the angular velocity from $\tilde{\Omega}^-$ to $\tilde{\bar{\Omega}}^+$ with Gaussian distributed random noises.

\subsection{Harmonic Analysis on $\SO3\times\torus^2$} \label{sec:pendulum_dft}

One obstacle in applying the proposed approach to the pendulum model is that the continuous state space, namely $\SO3\times \real^2$ is not compact.
Nevertheless, since the angular velocity is uniformly bounded by the initial mechanical energy of the pendulum, as long as $p(t_0,R,\tilde{\Omega})$ is compactly supported, we may assume $p(t,R,\tilde{\Omega})$ is compactly supported, uniformly in time $t$.
Therefore, the continuous state space can be regarded as $\SO{3}\times \torus^2$, where $\torus^2 = \sph^1 \times \sph^1$ is the 2-dimensional torus.
Noncommutative harmonic analysis on $\SO3$ has been presented in~\cite{chirikjian2000engineering,varshalovich1988quantum}, and harmonic analysis on $\sph$ is widely available. 
Here we review those materials needed to formulate harmonic analysis on $\SO3\times\torus^2$.

\paragraph{Representation}

Let the representations of $\SO3$ be denoted by $\{U^l(R)\}_{l\in\mathbb{N}}$, where the dimension of $U^l(R)$ is $d(l) = 2l+1$.
Suppose that $R\in\SO3$ is parameterized by the 3-2-3 Euler angles as
\begin{equation}
	R(\alpha,\beta,\gamma) = \exp(\alpha \hat e_3) \exp(\beta \hat e_2) \exp(\gamma \hat e_3),
\end{equation}
where $\alpha,\gamma\in[0,2\pi)$, $\beta\in[0,\pi]$.
For $-l \leq m_1, m_2\leq l$, the elements of the $l$-th representation $U^l(R)$ can be explicitly written as 
\begin{equation}
    U_{m_1,m_2}^l(R(\alpha,\beta,\gamma)) = e^{-im_1\alpha} d_{m_1,m_2}^l(\beta) e^{-im_2\gamma},
\end{equation}
where $d_{m_1,m_2}^l(\beta)$ is the real valued Wigner-d function~\cite{varshalovich1988quantum}.
Next, the representations of $\torus^2$ are given by
\begin{equation}
	V^n(\tilde{\Omega}) = \exp\left( \frac{i\pi n_1\Omega_1}{L} + \frac{i\pi n_2\Omega_2}{L} \right)
\end{equation}
for $n=(n_1,n_2)\in\mathbb{Z}^2$,
where $(\Omega_1,\Omega_2)$ is normalized by its uniform bound $L>0$, such that $\pi\Omega_j/L \in [-\pi,\pi)$, $j=1,2$, so $\tilde{\Omega} \in \torus^2$.
Then, the representations of $\SO3\times\torus^2$ are given by the tensor product of $U^l$ and $V^n$, and more explicitly $\{\{U_{m_1,m_2}^l(R) \cdot V^n(\tilde{\Omega})\}_{m_1,m_2=-l}^{l}\}_{l\in\mathbb{N}, n\in\mathbb{Z}^2}$,
which forms a complete orthonormal basis for the function space $L^2(\SO{3}\times \torus^2)$.

\paragraph{Sampling Theorem}

Consider a band-limited function on $\SO3\times \torus^2$ spanned by the representations with $l \leq l_0-1$ and $-n_0 \leq n_1, n_2 \leq n_0-1$.
According to the sampling theorem, its Fourier coefficients can be exactly recovered by the sample values on a certain grid and the associated quadrature rule. 
The grid on $\SO{3}$ can be designed in terms of Euler angles, with $2l_0$ points along each dimension:
\begin{equation}
	\alpha_{\nu_1} = \frac{\pi\nu_1}{l_0}, \qquad \beta_{\nu_2} = \frac{\pi(2\nu_2+1)}{4l_0}, \qquad \gamma_{\nu_3} = \frac{\pi\nu_3}{l_0},
\end{equation}
for $\nu_1, \nu_2, \nu_3 = 0,\ldots,2l_0-1$.
The quadrature rule associated with this grid \cite{kostelec2008ffts} is
\begin{equation}
	w_\nu = \frac{1}{4l_0^3} \sin(\beta_{\nu_2}) \sum_{j=0}^{l_0-1} \frac{1}{2j+1} \sin((2j+1)\beta_{\nu_2}).
\end{equation}
Similarly, a grid on $\torus^2$ can be defined as
\begin{equation} \label{eqn:grid_T2}
	\frac{\pi(\Omega_1)_{\mu_1}}{L} = \frac{\mu_1\pi}{n_0}, \qquad \frac{\pi(\Omega_2)_{\mu_2}}{L} = \frac{\mu_2\pi}{n_0},
\end{equation}
for $\mu_1, \mu_2 = -n_0,\ldots,n_0-1$.
The quadrature rule is simply
\begin{equation}
	w_{\mu} = \frac{1}{(2n_0)^2}.
\end{equation}

Using the above orthonormal basis and quadrature rules, the forward Fourier transform \eqref{eqn:dft_G} can be computed using the following finite summation:
\begin{equation} \label{eqn:dft_SO3_R2}
	\begin{aligned}
		\fourier_{m_1,m_2}^{l,n}[f] &= \sum_{\mu_1, \mu_2 = -n_0}^{n_0-1} \sum_{\nu_1,\nu_2,\nu_3=0}^{2l_0-1} w_{\nu} w_{\mu} f\Big( R(\alpha_{\nu_1},\beta_{\nu_2},\gamma_{\nu_3}), (\Omega_1)_{\mu_1}, (\Omega_2)_{\mu_2} \Big) \\
		&\quad \cdot e^{im_1\alpha_{\nu_1}} d_{m_1,m_2}^l(\beta_{\nu_2}) e^{im_2\gamma_{\nu_3}} \cdot \exp\left( -\frac{in_1\pi(\Omega_1)_{\mu_1}}{L} - \frac{in_2\pi(\Omega_2)_{\mu_2}}{L} \right),
	\end{aligned}
\end{equation}
for any $0\leq l\leq l_0-1$, $-l\leq m_1, m_2 \leq l$, and $-n_0\leq n_1, n_2 \leq n_0-1$.
Conversely, the backward Fourier transform \eqref{eqn:idft_G_finite} can be explicitly written as
\begin{equation} \label{eqn:idft_SO3_R2}
	\begin{aligned}
		&f\Big( R(\alpha_{\nu_1},\beta_{\nu_2},\gamma_{\nu_3}), (\Omega_1)_{\mu_1}, (\Omega_2)_{\mu_2} \Big) = \sum_{n_1,n_2 = -n_0}^{n_0-1} \sum_{l=0}^{l_0-1} \sum_{m_1,m_2 = -l}^l (2l+1) \fourier_{m_1,m_2}^{l,n}[f] \\
		&\qquad \qquad \cdot e^{-im_1\alpha_{\nu_1}} d_{m_1,m_2}^l(\beta_{\nu_2}) e^{-im_2\gamma_{\nu_3}} \cdot \exp\left( \frac{in_1\pi(\Omega_1)_{\mu_1}}{L} + \frac{in_2\pi(\Omega_2)_{\mu_2}}{L} \right),
	\end{aligned}
\end{equation}
to recover the function values on the grid.
The summations in \eqref{eqn:dft_SO3_R2} and \eqref{eqn:idft_SO3_R2} can be computed using a combination of the classic Cooley-Tukey FFT algorithm, and the FFT developed specifically for $\SO{3}$ \cite{kostelec2008ffts}.

\subsection{Implementation} \label{sec:pendulum_algo}

Now, the proposed method is implemented to the pendulum model as follows.

\paragraph{Continuous Dynamics}

First, for the continuous dynamics \eqref{eqn:pendulum_cont_5D}, the corresponding FP equation \eqref{eqn:FP_cont} can be written as
\begin{equation} \label{eqn:pendulum_cont_FP}
	\begin{aligned}
		\frac{\partial p^c(t,R,\tilde{\Omega})}{\partial t} = &-\sum_{j=1}^2 \liediff_j \big( \Omega_j p^c(t,R,\tilde{\Omega}) \big) - \sum_{j=1}^2  \frac{\partial}{\partial \Omega_j} \big( a_j^g(R)p^c(t,R,\tilde{\Omega}) \big) \\
		&+ \sum_{j=1}^2 B_j \frac{\partial}{\partial \Omega_j} \big( \Omega_j p^c(t,R,\tilde{\Omega}) \big) + \sum_{j,k=1}^2 \tilde{D}_{jk} \frac{\partial}{\partial \Omega_i \partial \Omega_j} p^c(t,R,\tilde{\Omega}),
	\end{aligned}
\end{equation}
where $a^g(R) = \tfrac{mg\rho_z}{J_1}[R_{32}, -R_{31}]^T$, and $\tilde{D} = \tfrac{1}{2}\tilde{H}_c\tilde{H}_c^T$.
Next, we present selected operational properties of the representations that are required to perform the Fourier transform for the right hand side of the above expression. 
For the representation $U^l(R)$ of $\SO{3}$, the associated Lie algebra representation has explicit forms:
\begin{subequations}
	\begin{align}
		u_{m_1,m_2}^l(\hat e_1) &= -\tfrac{1}{2}ic_{m_2}^l\delta_{m_1-1,m_2} - \tfrac{1}{2}ic_{-m_2}^l\delta_{m_1+1,m_2}, \\
		u_{m_1,m_2}^l(\hat e_2) &= -\tfrac{1}{2}c_{m_2}^l\delta_{m_1-1,m_2} + \tfrac{1}{2}c_{-m_2}^l\delta_{m_1+1,m_2}, \\
		u_{m_1,m_2}^l(\hat e_3) &= -im_1\delta_{m_1,m_2},
	\end{align}
\end{subequations}
where $c_m^l = \sqrt{(l-m)(l+m+1)}$.
These can be used to calculate $\fourier^{l,n}[\liediff_j(\Omega_jp^c)]$, the first term on the right hand side of \eqref{eqn:pendulum_cont_FP}, as in \eqref{eqn:dF_G}.
Also, the Lie algebra representation associated with $V^n(\tilde{\Omega})$ is
\begin{equation}
	v^n(e_j) = \frac{i\pi n_j}{L}, \qquad j = 1,2,
\end{equation}
which can be used to obtain those terms involving $\fourier^{l,n}[\tfrac{\partial }{\partial \Omega_j}]$ and $\fourier^{l,n}[\tfrac{\partial^2 p^c}{\partial \Omega_j \partial \Omega_k}]$ in \eqref{eqn:pendulum_cont_FP}.
Furthermore, utilizing the inverse convolution theorem specific to the Fourier series on $\torus^N$, for any $f\in L^2(\SO{3}\times \torus^2)$ and $g\in L^2(\torus^2)$, we may calculate the Fourier coefficient of $f\cdot g$ directly from those of $f$ and $g$:
\begin{equation} \label{eqn:dft-product}
	\mathcal{F}_{m_1,m_2}^{l,n}[fg] = \sum_{n_1',n_2' = -n_0}^{n_0-1} \fourier_{m_1,m_2}^{l,n-n'}[f] \fourier^{n'}[g],
\end{equation}
where $n-n' = (n_1-n_1',n_2-n_2')$.
This can be used for $\fourier^{l,n}[\Omega_jp^c]$ as an intermediate step to calculate $\fourier^{l,n}[\liediff_j(\Omega_jp^c)]$.
Using these properties, the Fourier transform of the right hand side for the continuous dynamics \eqref{eqn:pendulum_cont_FP} can be constructed.

\paragraph{Discrete Dynamics}

Next, we consider the discrete dynamics of the pendulum given by \eqref{eqn:pendulum_lambda} and \eqref{eqn:pendulum_kappa}.
The FP equation \eqref{eqn:FP_dist} can be simplified as
\begin{align}
	&\frac{\partial p^d(t,R,\tilde{\Omega})}{\partial t} =  \int_{\tfrac{\pi\tilde{\Omega}^-}{L} \in \torus^2} \kappa_{\Omega}(R,\tilde{\Omega}^-,\tilde{\Omega}) \lambda(R,\tilde{\Omega}^-) p^d(t,R,\tilde{\Omega}^-) \diff\tilde{\Omega}^- - \lambda(R,\tilde{\Omega})p^d(t,R,\tilde{\Omega}) \label{eqn:pendulum_dist_FP} 
\end{align}
where $\kappa_{\Omega}(R^-,\Omega^-,\Omega^+)$ is the second term of the right hand side of \eqref{eqn:pendulum_kappa}, and $\diff\Omega^-$ is the Lebesgue measure on $\real^2$.
As the attitude remains unchanged after any collision, the domain of integration in the above expression has been reduced to $\torus^2$.
More specifically, we have used the property of Dirac-delta function: for any continuous $f$ and $R_0\in\SO{3}$, $\int_{R\in\SO{3}} \delta(RR_0^T) f(R) \diff R = f(R_0)$.

For numerical implementation, the integral in \eqref{eqn:pendulum_dist_FP} can be evaluated with a finite summation using the grid \eqref{eqn:grid_T2} on $\torus^2$ as in \eqref{eqn:pendulum_dist_ODE}, where the quadrature weights corresponding to $\diff\tilde{\Omega}^-$ is $w'_\mu = \tfrac{L^2}{(2n_0)^2}$.
\begin{equation} \label{eqn:pendulum_dist_ODE}
	\begin{aligned}
		\frac{\partial p^d(t,R_\nu,\tilde{\Omega}_{\mu})}{\partial t} 
		&\approx \sum_{\mu'_1, \mu'_2 = -n_0}^{n_0-1} w'_{\mu'} \kappa_{\Omega}(R_\nu,\tilde{\Omega}_{\mu'}^-,\tilde{\Omega}_\mu) \lambda(R_\nu,\tilde{\Omega}_{\mu'}^-) p^d(t,R_\nu,\tilde{\Omega}_{\mu'}^-) \\
		&\qquad - \lambda(R_\nu,\tilde{\Omega}_\mu) p^d(t,R_\nu,\tilde{\Omega}_\mu).
	\end{aligned} 
\end{equation}
The density values propagated over the discrete dynamics on the grid can be directly utilized in the subsequent propagation over the continuous dynamics according to the splitting method. 

\subsection{Simulation Results}

The attitude and angular velocity uncertainties of the pendulum model are propagated using \cref{alog:UP}, with the explicit computations developed in Section \ref{sec:pendulum_dft} and Section \ref{sec:pendulum_algo}.
These are implemented in \texttt{c} with Nvidia GPU computing toolkit 11.2, along with a Matlab interface.
The standard FFT on $\torus^N$ is computed using \texttt{cuFFT}, and the discrete convolution in \eqref{eqn:dft-product} and the finite summation in \eqref{eqn:pendulum_dist_ODE} are computed using \texttt{cuTENSOR} 1.2.2.
All of the computations are in double precision.
The code is available at \cite{code}.
For $l_0=n_0=30$, the computation time of propagating over one time step is 245 seconds in average with a Nvidia A100-40GB GPU.

The initial uncertainty distribution is chosen as follows. 
The initial attitude follows a matrix Fisher distribution \cite{mardia2009directional} with parameter $F = \exp(-\frac{2\pi}{3}\hat e_2) \diag(15,15,15)$, i.e., the mean attitude is $\exp(-\frac{2\pi}{3}\hat e_2)$, and the variance is approximately $\tfrac{1}{30}$\SI{}{\radian^2} along each axis (\cref{fig:R_cont}(a)).
The initial angular velocity is Gaussian with zero mean and the standard deviation \SI{2}{\radian\per\second} (\cref{fig:Omega_cont}(a)).
The initial attitude and angular velocity are independent.
The parameters for the pendulum model and those designed for the computation are listed in \cref{tab:parameters}.
Equation \eqref{eqn:FP_cont} is integrated using the fourth order Runge-Kutta method, and \eqref{eqn:FP_dist} is propagated by the forward Euler's method.
The simulation is carried out for eight seconds with the step size of $\Delta t = \SI{0.0025}{\second}$.

\begin{table}
	\centering
	\scriptsize
	\caption{Simulation Parameters \label{tab:parameters}}
	\begin{tabularx}{0.8\textwidth}{cX}
		\toprule
		Dimensions & \hfill \makecell{$h$ \\ \SI{0.2}{\meter}}
					 \hfill \makecell{$r$ \\ \SI{0.025}{\meter}}
					 \hfill \makecell{$\rho_z$ \\ \SI{0.1}{\meter}}
					 \hfill \makecell{$d_{\text{wall}}$ \\ \SI{0.12}{\meter}}
					 \hfill \null \\
		\midrule
		Inertia & \hfill \makecell{$m$ \\ \SI{1.0642}{\kilogram}}
		          \hfill \makecell{$J_1$ \\ \SI{0.0144}{\kilogram\meter\squared}}
		          \hfill \makecell{$g$ \\ \SI{9.8}{\meter\per\second\squared}}
		          \hfill \null \\
		\midrule
		Cont. Dynamics & \hfill \makecell{$\tilde{B}$ \\ $\diag(0.2,0.2)$ [\SI{}{\per\second}]}
		                 \hfill \makecell{$\tilde{H}_c$ \\ $\begin{bmatrix} 1 & 0 & 0 \\ 0 & 1 & 0 \end{bmatrix}$ [\SI{}{\radian\second^{-3/2}}]}
		                 \hfill \null \\
		\midrule
		Dist. Dynamics & \hfill \makecell{$\theta_t$ \\ \SI{5}{\deg}}
		                 \hfill \makecell{$\lambda_{\text{max}}$ \\ 100}
		                 \hfill \makecell{$\varepsilon$ \\ 0.8}
		                 \hfill \makecell{$H_d$ \\ $\diag(0.05,0.05)$ [\SI{}{\radian\per\second}]}
		                 \hfill \null \\
		\midrule
		Computation & \hfill \makecell{$l_0$, $n_0$ \\ 30}
		              \hfill \makecell{$L$ \\ \SI{14.5}{\radian\per\second}}
		              \hfill \makecell{$\Delta t$ \\ \SI{0.0025}{\second}}
		              \hfill \null \\
		\bottomrule
	\end{tabularx}
\end{table}

\paragraph{Propagation of Continuous Dynamics}
We first propagate the uncertainty of the pendulum without collisions, i.e., only \eqref{eqn:FP_cont} is integrated.
The attitude uncertainty is depicted by the marginal distribution of the coordinates of body-fixed base axes via color shading in \cref{fig:R_cont}.
In other words, the red, greed, and blue shades represent the marginal density of the $\vec b_1$, $\vec b_2$, and $\vec b_3$ axes respectively, and darker color indicates larger density value.
Initially, the attitude is concentrated where $\vec b_3$ is about \SI{60}{\deg} to the vertical position (\cref{fig:R_cont}(a)).
After the pendulum is released, it swings about the $\vec e_2$ axis until it reaches the opposite limit position (\cref{fig:R_cont}(d)), and swings back to somewhere slightly below the initial position (\cref{fig:R_cont}(g)), which is repeated later on.
At the same time, the uncertainty spreads about the $\vec b_3$ axis, i.e., the rotation about $\vec b_3$ axis becomes more and more dispersed.
After several cycles, the $\vec b_3$ axis becomes concentrated near the vertical direction (\cref{fig:R_cont}(q)-(t)), since the energy is dissipated by the damping of the pendulum.
Also, the rotation about $\vec b_3$ finally becomes almost uniformly distributed.
The same process can be observed from the bottom view for the distribution of $\vec b_3$ axis shown in \cref{fig:b3_cont}.

\begin{figure}
	\centering
	\begin{subfigure}{0.49\textwidth}
		\centering
		\includegraphics{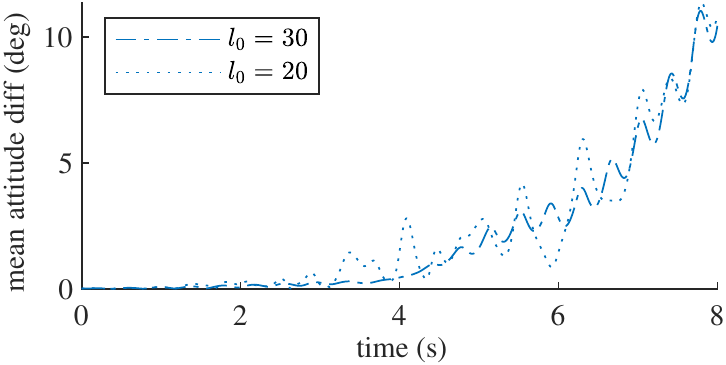}
		\caption{Difference of mean attitude \label{subfig:R_diff}}
	\end{subfigure}
	\begin{subfigure}{0.49\textwidth}
		\centering
		\includegraphics{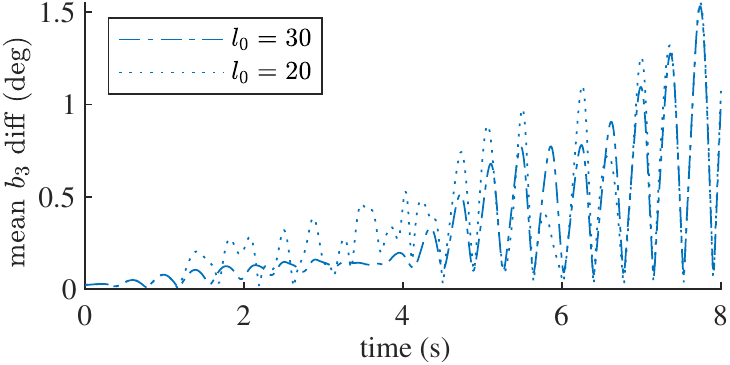}
		\caption{Difference of the mean of $b_3$ \label{subfig:b3_diff}}
	\end{subfigure}
	\begin{subfigure}{0.49\textwidth}
		\centering
		\includegraphics{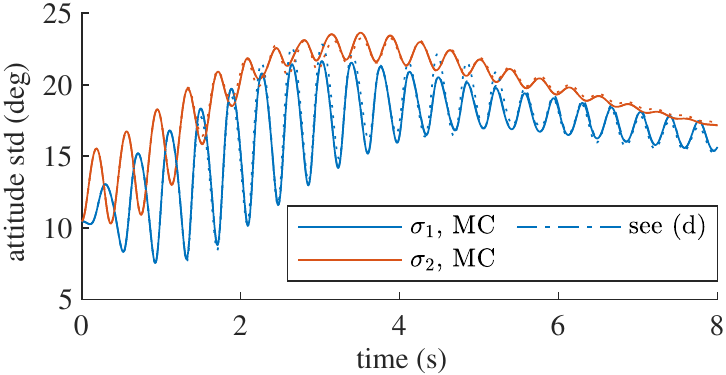}
		\caption{Attitude dispersion \label{subfig:std_R}}
	\end{subfigure}
	\begin{subfigure}{0.49\textwidth}
		\centering
		\includegraphics{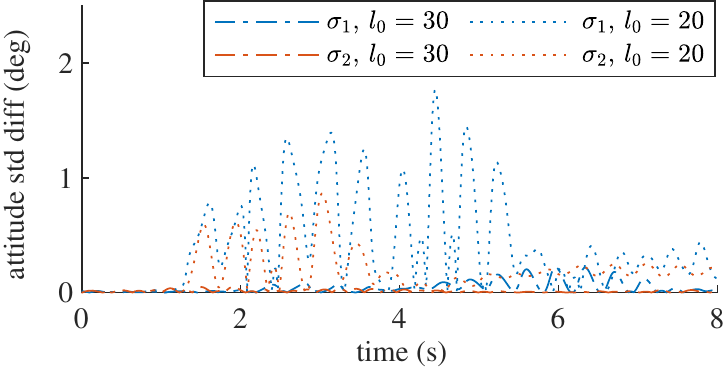}
		\caption{Difference of attitude dispersion \label{subfig:std_Rdiff}}
	\end{subfigure}
	\begin{subfigure}{0.49\textwidth}
		\centering
		\includegraphics{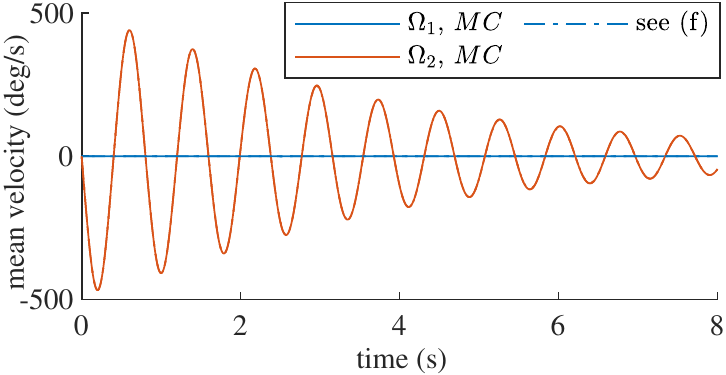}
		\caption{Mean angular velocity \label{subfig:mean_Omega}}
	\end{subfigure}
	\begin{subfigure}{0.49\textwidth}
		\centering
		\includegraphics{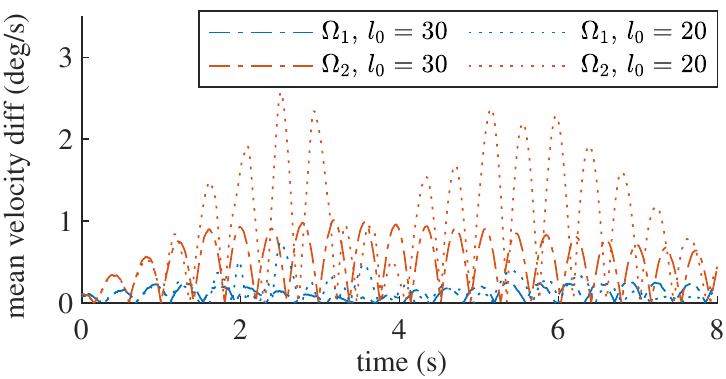}
		\caption{Difference of mean angular velocity \label{subfig:mean_Omegadiff}}
	\end{subfigure}
	\begin{subfigure}{0.49\textwidth}
		\centering
		\includegraphics{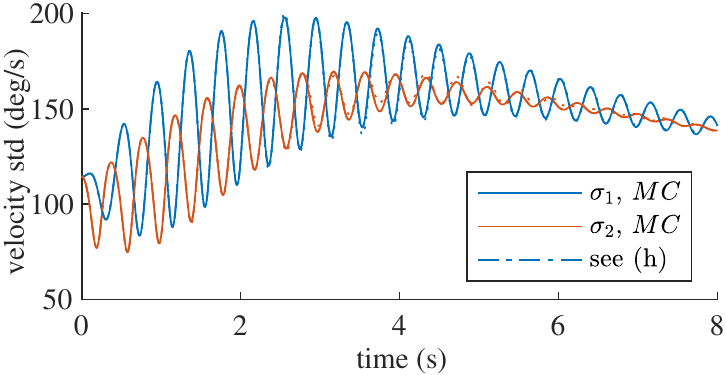}
		\caption{Angular velocity dispersion \label{subfig:std_Omega}}
	\end{subfigure}
	\begin{subfigure}{0.49\textwidth}
		\centering
		\includegraphics{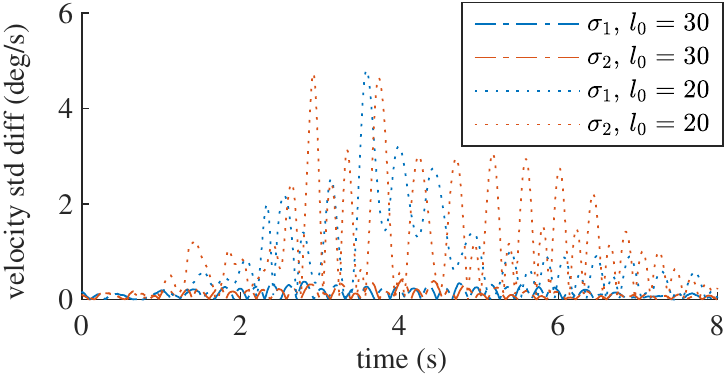}
		\caption{Difference of angular velocity dispersion \label{subfig:std_Omegadiff}}
	\end{subfigure}
	\caption{Comparison of proposed method with Monte Carlo simulation without collisions. \label{fig:compare}}
\end{figure}

\begin{figure}
	\centering
	\includegraphics{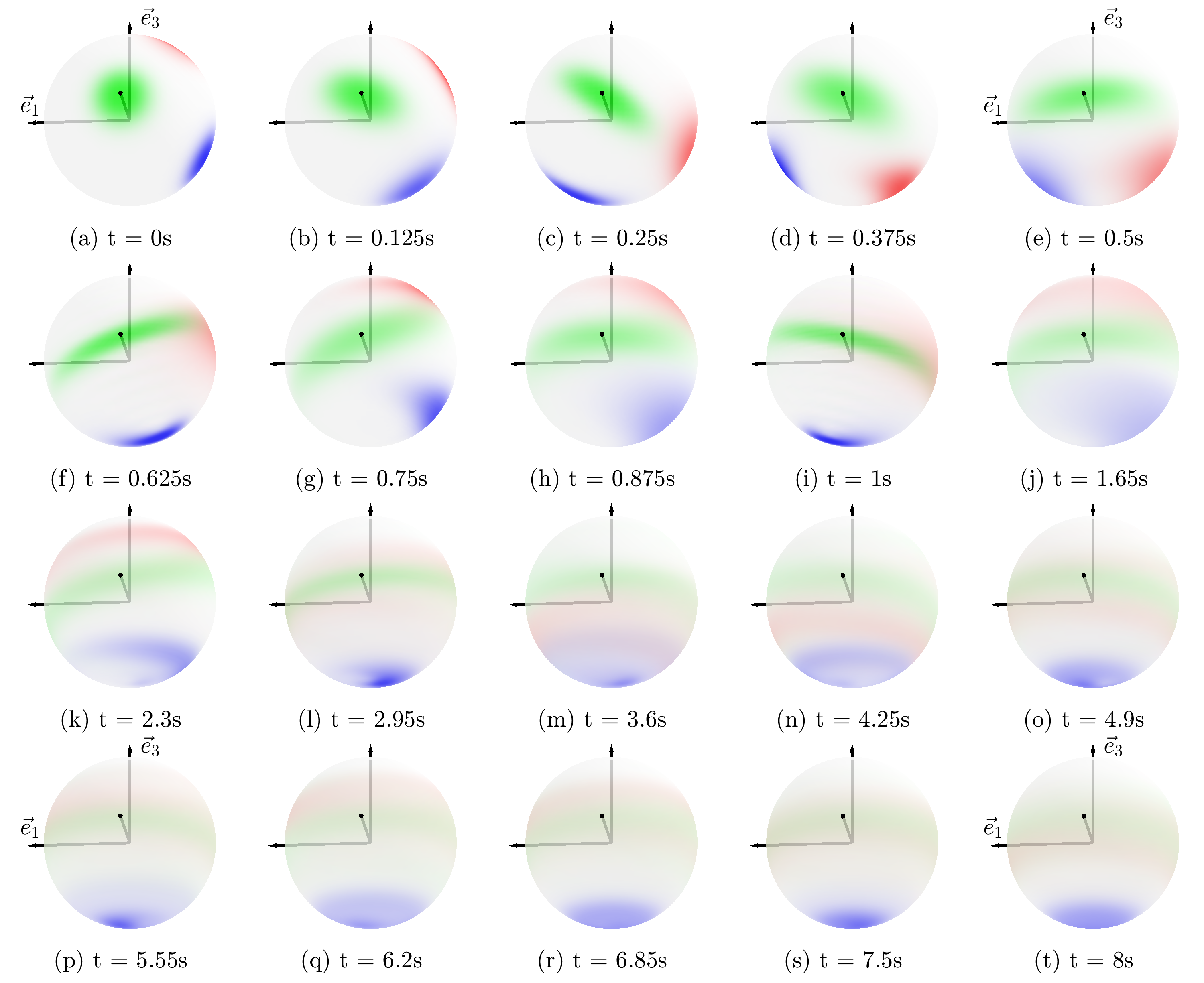}
	\caption{Marginal distribution of attitude without collisions. \label{fig:R_cont}}
\end{figure}

\begin{figure}
	\centering
	\includegraphics{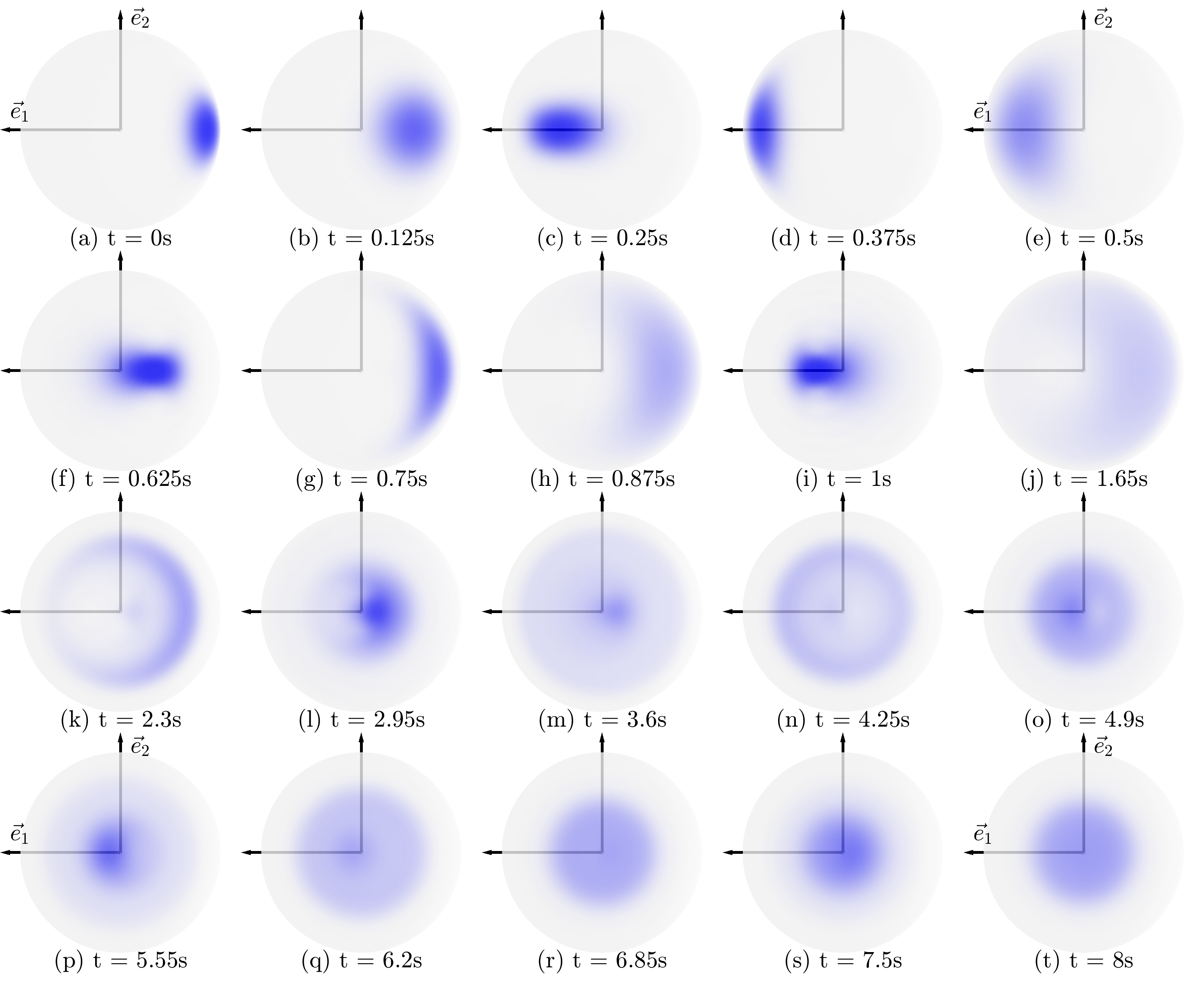}
	\caption{Marginal distribution of $b_3$ without collisions. \label{fig:b3_cont}}
\end{figure}

\begin{figure}
	\centering
	\includegraphics{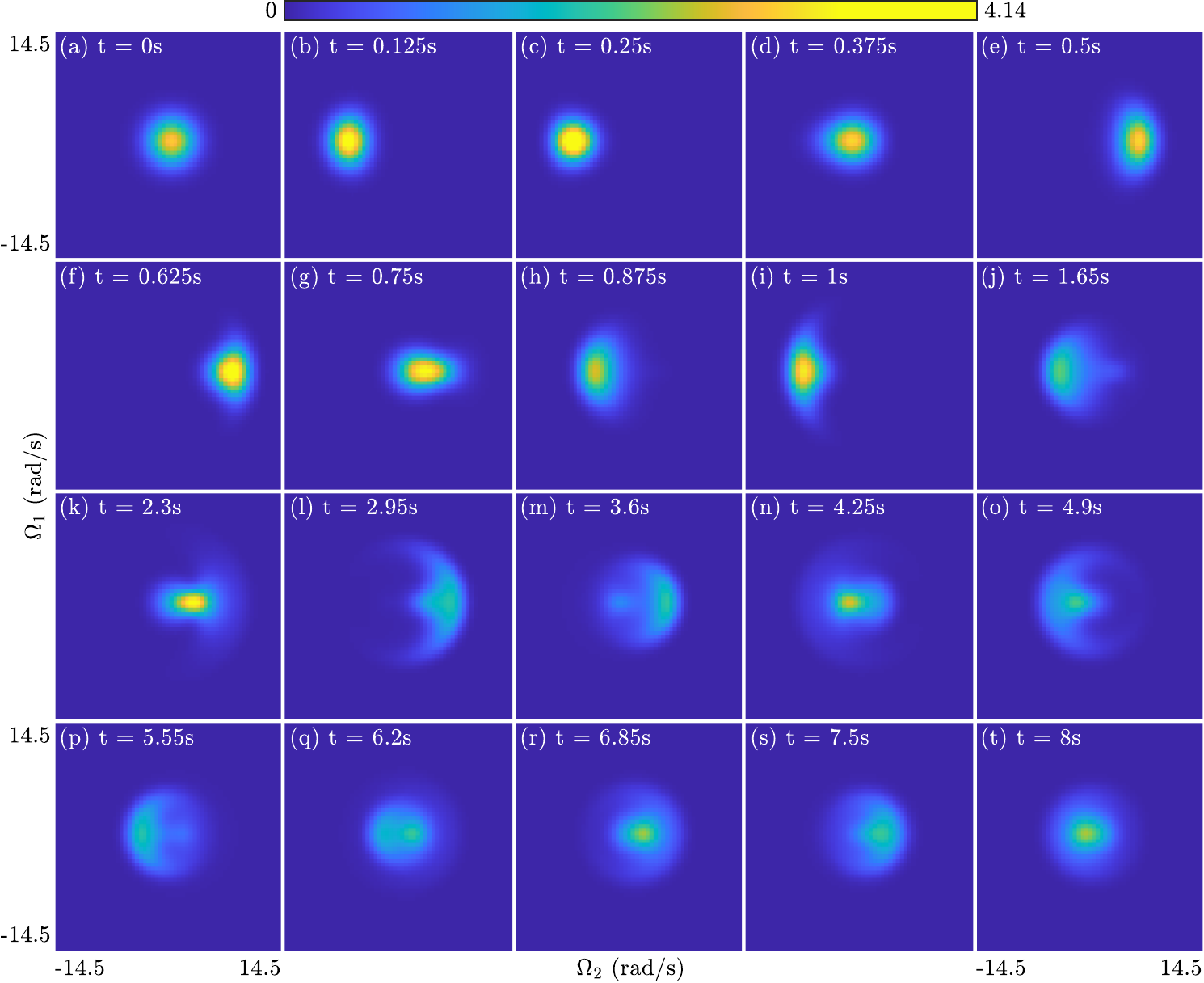}
	\caption{Marginal distribution of angular velocity without collisions. \label{fig:Omega_cont}}
\end{figure}

The marginal density for the angular velocity is shown in \cref{fig:Omega_cont}.
Initially, it is concentrated around zero (\cref{fig:Omega_cont}(a)).
After the pendulum is released, the angular velocity around $\vec b_2$, i.e., $\Omega_2$ accelerates and decelerates in the negative direction (\cref{fig:Omega_cont}(b)-(d)), as the pendulum swings and reaches the opposite limit.
Then $\Omega_2$ accelerates into the positive direction as the pendulum swings back, and this is repeated.
Finally, $\Omega$ becomes concentrated near zero again, after the energy is mostly dissipated (\cref{fig:Omega_cont}(t)).
During the process, the distribution of angular velocity displays some interesting shapes (\cref{fig:Omega_cont}(i)-(o)).
This is because the area with smaller $\Omega_1$ leads to oscillations compared to other areas with larger $\Omega_1$.
This illustrates one of the benefits of the proposed method that is capable of representing an arbitrary density function, and this cannot be achieved with the common Gaussian distribution. 

Next, the numerical results of the proposed method are compared against a Monte Carlo simulation with one million samples in \cref{fig:compare}.
The differences of the mean attitude and the $b_3$ direction between the Monte Carlo simulation and the proposed method with $l_0=n_0=20,30$ are shown in \cref{fig:compare}(a)(b), in terms of angles. 
It is shown that the difference of the mean direction of $\vec b_3$ is below \SI{1.5}{\deg}, which is very small compared to the standard deviation of attitude that is around \SI{15}{\deg}.
The difference of the mean attitude becomes relatively large (above \SI{5}{\deg}) after \SI{6}{\second}.
However, this is contributed by the fact that the rotation around $\vec b_3$ becomes close to a uniform distribution as seen in \cref{fig:R_cont}(q)-(t), thereby making the mean value less distinctive.
The standard deviations of attitude around $\vec e_1$ and $\vec e_2$ axes are compared in \cref{fig:compare}(c), with their discrepancies  more explicitly shown in \cref{fig:compare}(d).
Again the differences between the Monte Carlo simulation and the proposed method are small compared to their absolute magnitudes.
Similarly, the mean and standard deviation of angular velocity are compared in \cref{fig:compare}(e)(g), with the differences depicted in \cref{fig:compare}(f)(h).
In general, the larger bandwidth $b_0=n_0=30$ makes the uncertainty propagation more accurate compared to $b_0=n_0=20$, especially for the dispersion represented by standard deviations.
The moments computed by the proposed method are consistent with the Monte Carlo simulation.
However, the proposed method provides the probability distribution that carries the complete stochastic properties of the hybrid state beyond the moments. 

\paragraph{Propagation of GSHS}
Next, we propagate the uncertainty of the pendulum with collisions, i.e., both \eqref{eqn:FP_cont} and \eqref{eqn:FP_dist} are integrated.
The marginal distribution of attitude is shown in \cref{fig:R_hybrid}, and in \cref{fig:b3_hybrid} where only the marginal distribution of $\vec b_3$ is observed from bottom.
The wall is depicted by a gray plane.
Similar to the case without collisions, initially the attitude is concentrated where $\vec b_3$ is \SI{60}{\deg} from the vertical.
And after the pendulum is released, it swings about the $\vec e_2$ axis.
When the pendulum collides with the wall (\cref{fig:R_hybrid}(c)), it cannot penetrate through the wall, but rebounds backwards (\cref{fig:R_hybrid}(d)).
This is more clearly seen by comparing \cref{fig:b3_hybrid}(c) with \cref{fig:b3_cont}(c).
Then the pendulum swings back to somewhere below the initial position (\cref{fig:R_hybrid}(e)) due to the friction.
These are repeated for several cycles until the energy is mostly dissipated, when $\vec b_3$ is concentrated around the vertical direction, and no longer reaches the wall (\cref{fig:R_hybrid}(r)-(t)).
Compared with the case without collisions, the energy is dissipated more quickly, since it is also lost during the collision due to the coefficient of restitution less than one, besides the damping.
Note that there are some densities of $b_3$ that are slightly on the left of the wall during the collision (\cref{fig:b3_hybrid}(c),(h)).
This is because the rate function \eqref{eqn:pendulum_lambda} is not infinitely large in the guard set, thus there is a small probability that the discrete jump is not triggered when the pendulum is on the left.
But this probability becomes smaller when the pendulum further penetrates through the wall, since the rate function increases.
This can be interpreted as that the probability of rebounds increases as the third body-fixed axis $\vec b_3$ becomes closer to the wall.
The computational benefit is that the density changes gradually around the boundary of the guard set, instead of becoming zero abruptly like a step function, which allows capturing the space variation of the density function without excessively high bandwidth.

The marginal density of angular velocity is shown in \cref{fig:Omega_hybrid}.
Similar to the case without collisions, the angular velocity is concentrated around zero initially (\cref{fig:Omega_hybrid}(a)), and accelerates into the negative $\Omega_2$ direction after the pendulum is released (\cref{fig:Omega_hybrid}(b)).
Nevertheless, instead of decelerating to zero, the angular velocity undergoes jump due to the collision, i.e., the negative $\Omega_2$ is reset to be positive instantly during the discrete jump (\cref{fig:Omega_hybrid}(c)) according to the reset kernel \eqref{eqn:pendulum_kappa}, thereby separating the angular velocity distribution into two parts. 
Later, most of the angular velocity has completed the collision and $\Omega_2$ continuous to accelerate (\cref{fig:Omega_hybrid}(d)) until the pendulum reaches the vertical position, and begins to decelerate (\cref{fig:Omega_hybrid}(e)) afterwards.
These are repeated by several cycles, until the energy is mostly dissipated and the angular velocity is concentrated around zero again (\cref{fig:Omega_hybrid}(r)-(t)).
These illustrate that the proposed method successfully captures the complex interplay between the uncertainty distributions of attitude and angular velocity, as well as the collision, while generating the propagated density function for the hybrid state. 

The propagated uncertainty using the proposed method with $l_0=n_0=30$ is also compared with a Monte Carlo simulation with a million samples in \cref{fig:compare_hybrid}.
It is seen the differences of mean attitude and mean direction of $\vec{b}_3$ are small compared with the attitude standard deviation. 
The differences of attitude standard deviation, mean and standard deviation of angular velocity are in general within 10\% of their absolute magnitudes.

\begin{figure}
	\centering
	\begin{subfigure}{0.49\textwidth}
		\centering
		\includegraphics{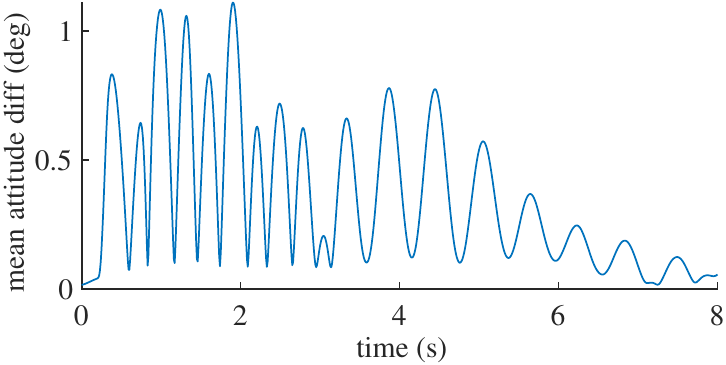}
		\caption{Difference of mean attitude \label{subfig:R_diff_hybrid}}
	\end{subfigure}
	\begin{subfigure}{0.49\textwidth}
		\centering
		\includegraphics{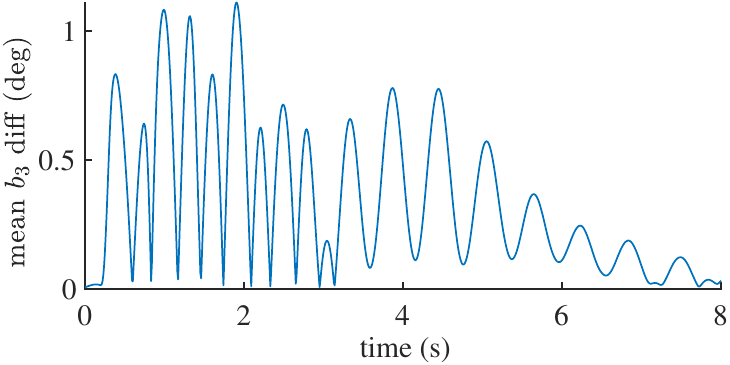}
		\caption{Difference of the mean of $b_3$ \label{subfig:b3_diff_hybrid}}
	\end{subfigure}
	\begin{subfigure}{0.49\textwidth}
		\centering
		\includegraphics{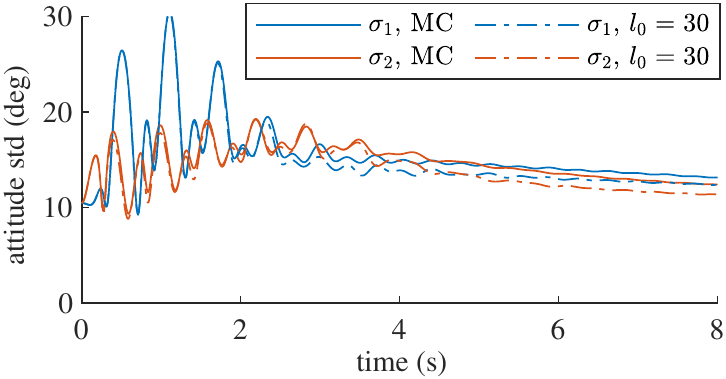}
		\caption{Attitude dispersion \label{subfig:std_R_hybrid}}
	\end{subfigure}
	\begin{subfigure}{0.49\textwidth}
		\centering
		\includegraphics{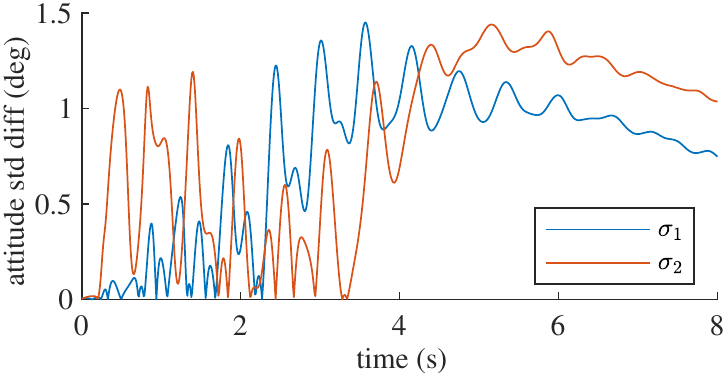}
		\caption{Difference of attitude dispersion \label{subfig:std_Rdiff_hybrid}}
	\end{subfigure}
	\begin{subfigure}{0.49\textwidth}
		\centering
		\includegraphics{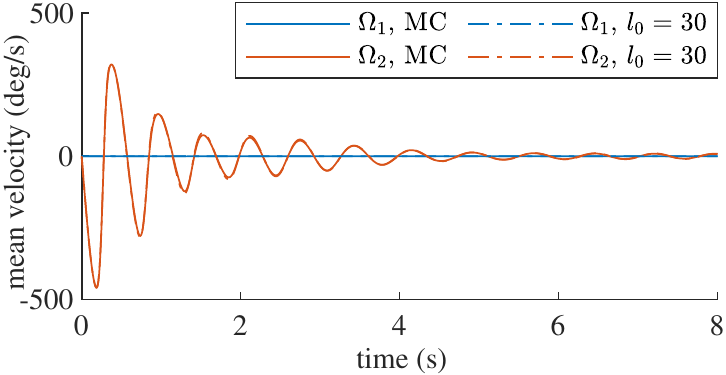}
		\caption{Mean angular velocity \label{subfig:mean_Omega_hybrid}}
	\end{subfigure}
	\begin{subfigure}{0.49\textwidth}
		\centering
		\includegraphics{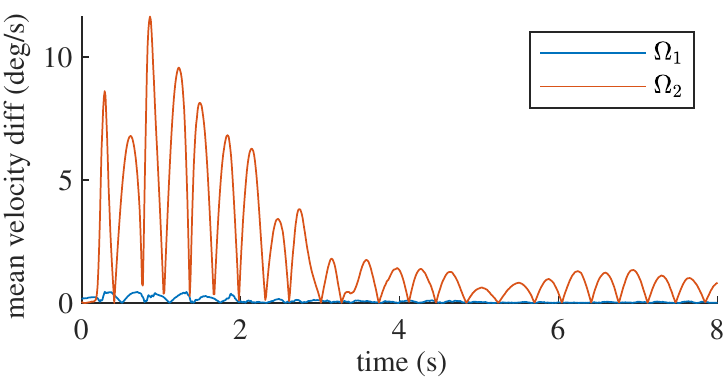}
		\caption{Difference of mean angular velocity \label{subfig:mean_Omegadiff_hybrid}}
	\end{subfigure}
	\begin{subfigure}{0.49\textwidth}
		\centering
		\includegraphics{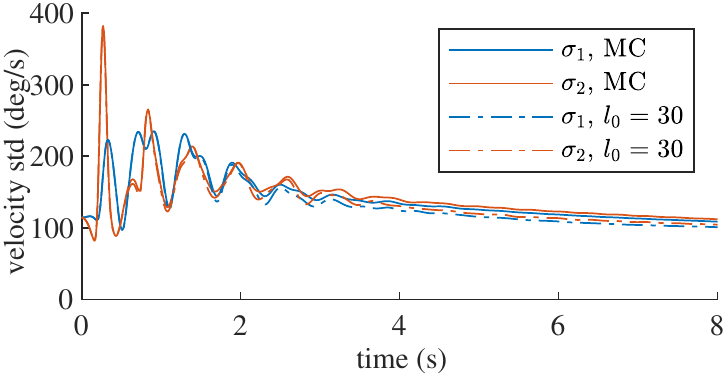}
		\caption{Angular velocity dispersion \label{subfig:std_Omega_hybrid}}
	\end{subfigure}
	\begin{subfigure}{0.49\textwidth}
		\centering
		\includegraphics{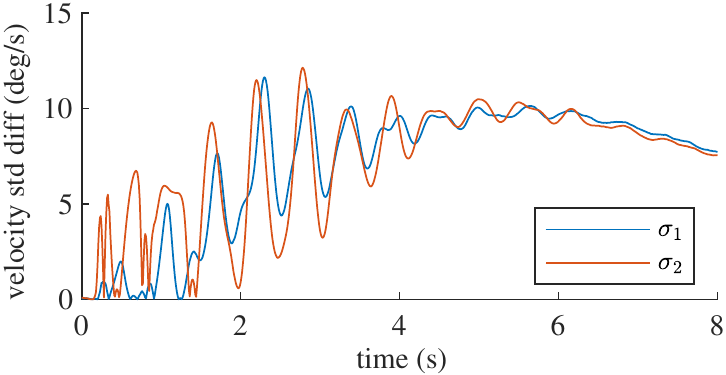}
		\caption{Difference of angular velocity dispersion \label{subfig:std_Omegadiff_hybrid}}
	\end{subfigure}
	\caption{Comparison of proposed method with Monte Carlo simulation with collisions. \label{fig:compare_hybrid}}
\end{figure}

\begin{figure}
	\centering
	\includegraphics{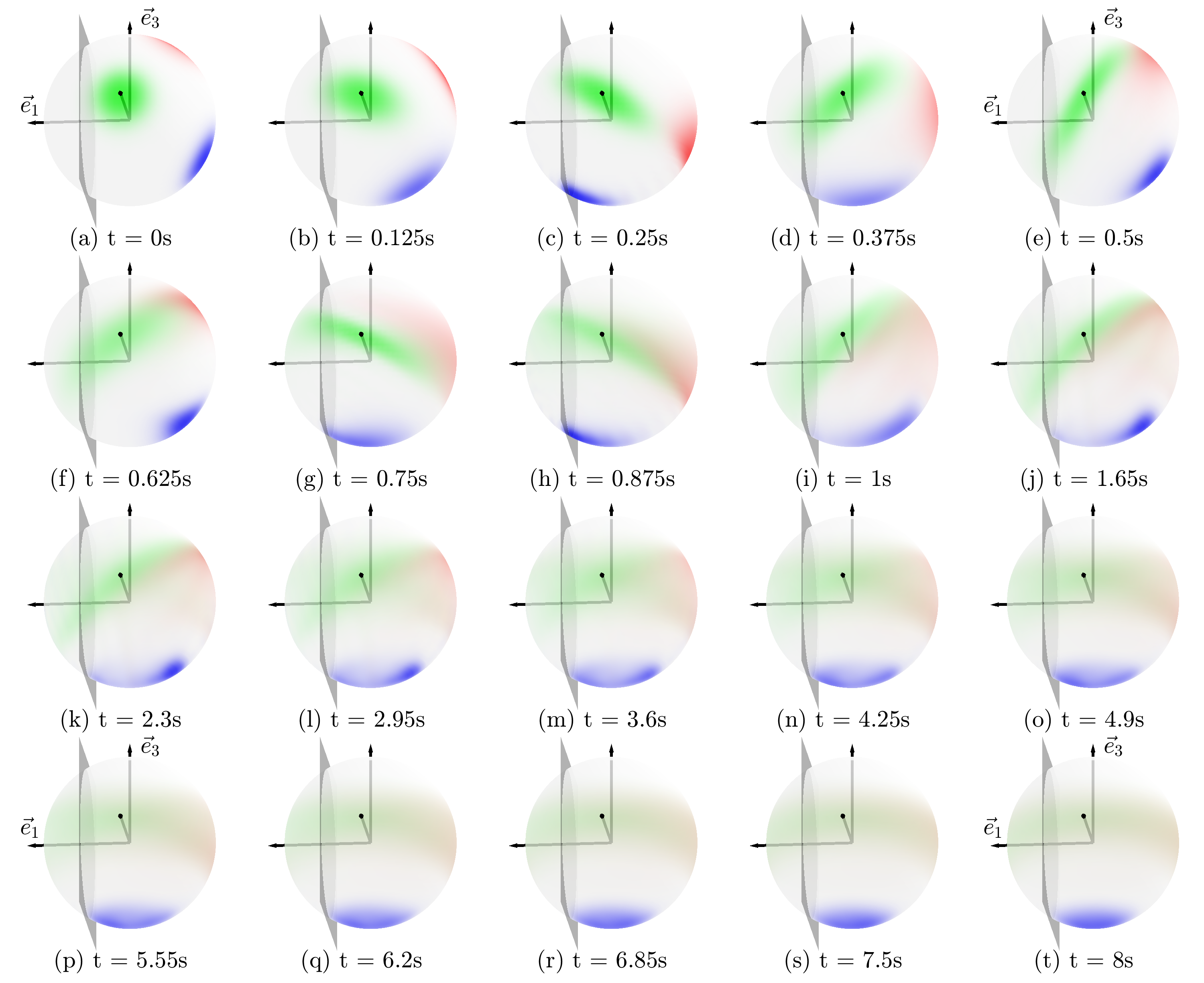}
	\caption{Marginal distribution of attitude with collisions. \label{fig:R_hybrid}}
\end{figure}

\begin{figure}
	\centering
	\includegraphics{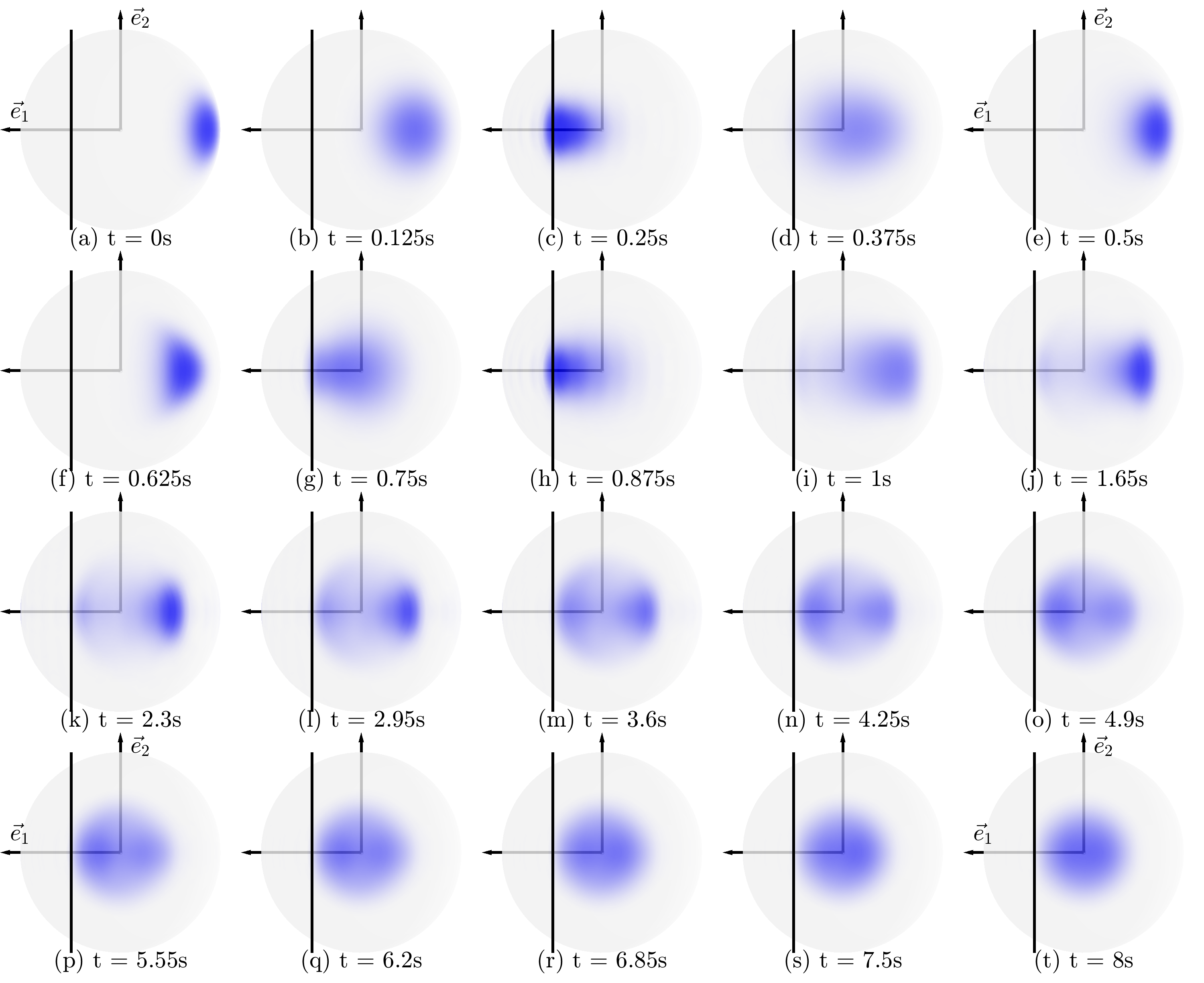}
	\caption{Marginal distribution of $b_3$ with collisions. \label{fig:b3_hybrid}}
\end{figure}

\begin{figure}
	\centering
	\includegraphics{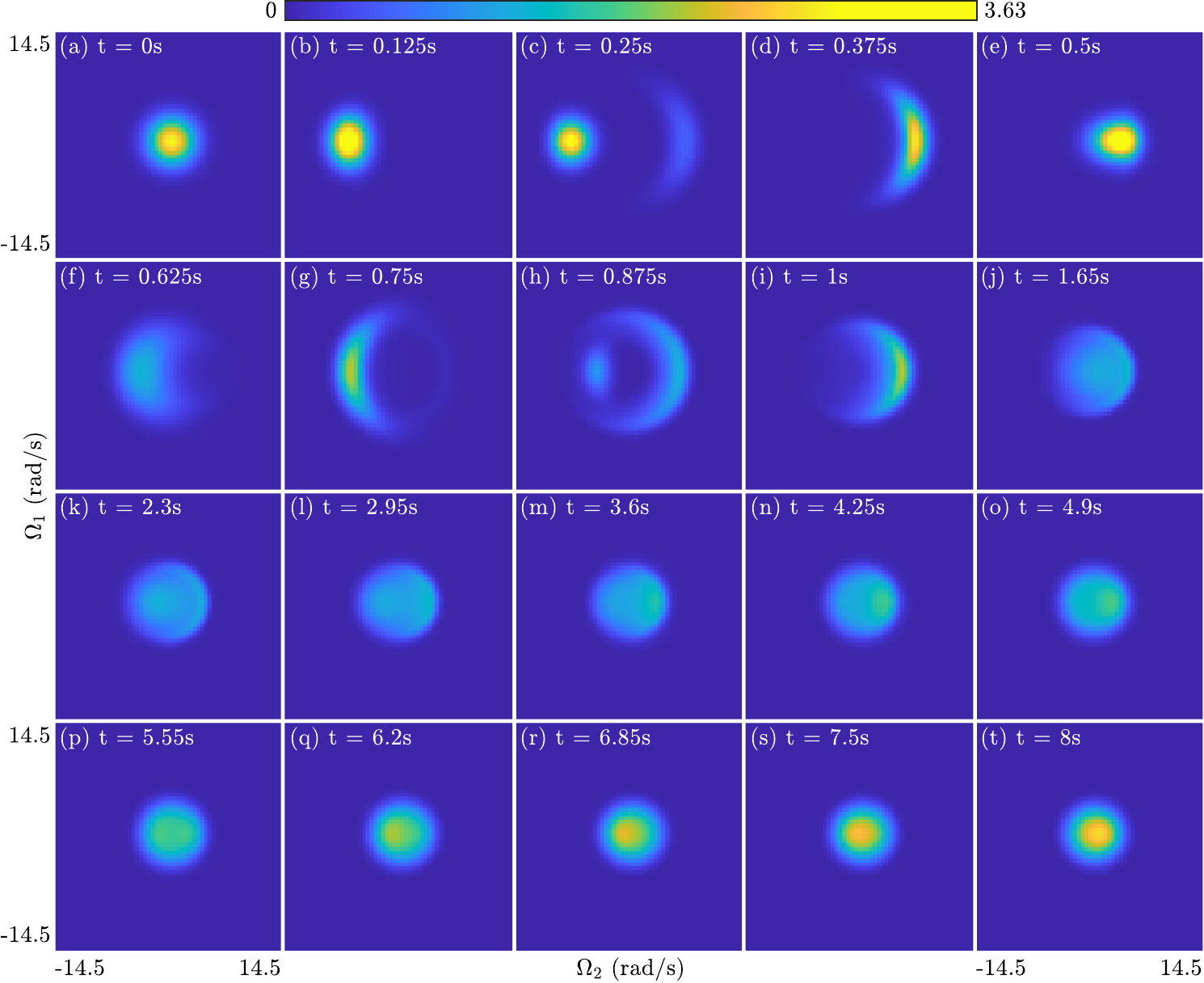}
	\caption{Marginal distribution of angular velocity with collisions. \label{fig:Omega_hybrid}}
\end{figure}


\section{Conclusions} \label{sec:conclusion}
In this paper, we propose a computational framework to propagate the uncertainty of a general stochastic hybrid system where the continuous state space is a compact Lie group.
The Fokker-Planck equation for the GSHS is split into two parts: a partial differential equation corresponding to the continuous dynamics, and an integral equation corresponding to the discrete dynamics.
The two split equations are solved alternatively and combined using a first order splitting scheme.
In particular, the PDE is solved using the classic spectral method, by invoking noncommutative harmonic analysis on a compact Lie group.
The proposed method is applied to a 3D pendulum that collides with a planar wall.
It is exhibited that the proposed method is able to capture complex uncertainty distributions with arbitrary shapes or large dispersion, and the computed density function can be directly used for visualization or for constructing any stochastic properties. 


\bibliographystyle{siamplain}
\bibliography{SIADS21}

\end{document}